\documentclass[12pt,reqno]{amsart}
\usepackage{amsmath, amsthm, amssymb}

\topmargin 1cm
\advance \topmargin by -\headheight
\advance \topmargin by -\headsep
     
\setlength{\paperheight}{270mm}%
\setlength{\paperwidth}{192mm}%
\textheight 22.5cm
\oddsidemargin 1cm
\evensidemargin \oddsidemargin
\marginparwidth 1.25cm
\textwidth 14cm
\setlength{\parskip}{0.05cm}

\newtheorem{theorem}{Theorem}[section]
\newtheorem{lemma}[theorem]{Lemma}

\theoremstyle{definition}

\theoremstyle{remark}

\numberwithin{equation}{section}

\def\bfa{{\mathbf a}}
\def\bfb{{\mathbf b}}

\def\bfm{{\mathbf m}}

\def\calN{{\mathcal N}}

\def\calS{{\mathcal S}}

\def\dbF{{\mathbb F}}\def\dbN{{\mathbb N}}
\def\dbR{{\mathbb R}}

\def\grA{{\mathfrak A}}

\def\alp{{\alpha}} 
\def\bet{{\beta}}  
 
\def\del{{\delta}}

\def\tet{{\theta}}  
\def\kap{{\kappa}}

\def\ome{{\omega}} 

\def\eps{\varepsilon}

\def\le{\leqslant} \def\ge{\geqslant}

\begin{document}
\title[A low-energy decomposition theorem]{A low-energy decomposition theorem}
\author[Antal Balog]{Antal Balog$^*$}
\address{Alfr\'ed R\'enyi Institute of Mathematics, Re\'altanoda u. 13-15, H-1053 Budapest, 
Hungary.}
\email{balog@renyi.mta.hu}
\author[Trevor D. Wooley]{Trevor D. Wooley}
\address{School of Mathematics, University of Bristol, University Walk, Clifton, 
Bristol BS8 1TW, United Kingdom}
\email{matdw@bristol.ac.uk}
\thanks{$^*$Research supported by the Hungarian National Science Foundation Grants 
K104183 and K109789.}
\subjclass[2010]{11B13, 11B30, 11B75}
\keywords{Sum-product estimates, additive energy, multiplicative energy}
\date{}
\begin{abstract} We prove that any finite set of real numbers can be split into two parts, 
one part being highly non-additive and the other highly non-multiplicative.
\end{abstract}
\maketitle

\section{Introduction} The Erd\H os-Szemer\'edi sum-product conjecture asserts that the 
additive structure of a finite set of real numbers should be essentially independent of its 
multiplicative structure. Given finite sets of real numbers $A$ and $B$, define the sum set 
and product set by
$$A+B=\{a+b:(a,b)\in A\times B\}\quad \text{and}\quad A\cdot B=\{ab:(a,b)\in 
A\times B\}.$$
Then, on writing $|\calS|$ for the cardinality of a set $\calS$, the conjecture of Erd\H os 
and Szemer\'edi (see the introduction of \cite{ES1983}) asserts that for any $\eps>0$ and 
for any sufficiently large finite set $A\subset \dbR$, one should have
$$\max\{|A+A|,|A\cdot A|\}\ge |A|^{2-\eps}.$$
The sharpest conclusion in this direction available in the published literature is due to 
Solymosi \cite[Corollary 2.2]{Sol2009}, and shows that
$$\max\{|A+A|,|A\cdot A|\}\ge \frac{|A|^{4/3}}{2\lceil \log |A|\rceil^{1/3}}.$$
This result has recently been improved by Konyagin and Shkredov, to the extent that the 
exponent $\frac{4}{3}$ may now be replaced by $\frac{4}{3}+c$, for any $c<1/20598$ 
(see \cite[Theorem 3]{KS2015} and the discussion concluding the latter paper).\par

As is well known, should the elements of $A$ be controlled by additive structure, then 
$|A+A|$ is small. Likewise, should $A$ be controlled by multiplicative structure, then 
$|A\cdot A|$ is small. The Erd\H os-Szemer\'edi conjecture expresses the belief that these 
two behaviours cannot be exhibited simultaneously.\par

A concrete measure of the additivity of a set is its additive energy
$$E_+(A)=\text{card}\{\bfa\in A^4:a_1+a_2=a_3+a_4\}.$$
Similarly, the multiplicativity of a set is measured by its multiplicative energy
$$E_\times(A)=\text{card}\{\bfa\in A^4:a_1a_2=a_3a_4\}.$$
One also has corresponding measures of the energy between two sets, namely
$$E_+(A,B)=\text{card}\{ (\bfa,\bfb)\in A^2\times B^2:a_1+b_1=a_2+b_2\}$$
and
$$E_\times (A,B)=\text{card}\{ (\bfa,\bfb)\in A^2\times B^2:a_1b_1=a_2b_2\}.$$

\par Writing
$$r_{A+B}(x)=\text{card}\{ (a,b)\in A\times B: a+b=x\}$$
and
$$r_{A\cdot B}(x)=\text{card}\{ (a,b)\in A\times B: ab=x\},$$
we see that
$$E_+(A)=\sum_{x\in A+A}r_{A+A}(x)^2\quad \text{and}\quad E_\times (A)=
\sum_{x\in A\cdot A}r_{A\cdot A}(x)^2.$$
It follows from Cauchy's inequality that
\begin{equation}\label{1.1}
|A|^4=\biggl( \sum_{x\in A+A}r_{A+A}(x)\biggr)^2\le E_+(A)|A+A|,
\end{equation}
so that, whenever $|A+A|$ is small, then $E_+(A)$ is big. In similar fashion, if $|A\cdot A|$ 
is small, then $E_\times(A)$ is necessarily big. Thus, one might na\"ively believe that the 
sum-product conjecture is manifested by the phenomenon that one or other of $E_+(A)$ 
and $E_\times (A)$ is always small. However, a moments' reflection reveals that such is 
certainly not the case, since the respective converses of the above observations are in 
general false. Thus, if $N\in \dbN$ and
\begin{equation}\label{1.2}
A=\{0,1,\ldots ,N-1\}\cup \{N,N^2,\ldots ,N^N\},
\end{equation}
then
$$\min\{ E_+(A),E_\times (A)\} \gg N^3\gg |A|^3.$$

\par In \S3, we show that any set $A$ can be split into two parts $B$ and $C$, having the 
property that both $E_+(B)$ and $E_\times (C)$ are small. Consequently, the na\"ive belief 
expressed above is obstructed only by examples closely related to that defined by 
(\ref{1.2}).

\begin{theorem}\label{theorem1.1}
Let $A$ be a finite subset of the real numbers. Then, with $\del=\frac{2}{33}$, there exist 
disjoint subsets $B$ and $C$ of $A$, with $A=B\cup C$,
$$\max\{ E_+(B),E_\times (C)\}\ll |A|^{3-\del}(\log |A|)^{1-\del}$$
and
$$\max\{ E_+(B,C),E_\times (B,C)\}\ll |A|^{3-\del/2}(\log |A|)^{(1-\del)/2}.$$
\end{theorem}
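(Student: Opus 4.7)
The plan is to partition $A$ based on how much each element contributes to the additive energy. For each $a\in A$ set
\[
\rho_+(a)=|\{(a_1,a_2,a_3)\in A^3:a+a_1=a_2+a_3\}|
\]
and define $\rho_\times(a)$ analogously with sums replaced by products, so that $E_+(A)=\sum_{a\in A}\rho_+(a)$ and $E_\times(A)=\sum_{a\in A}\rho_\times(a)$. For a threshold $T$ of order $|A|^{2-\del}$ to be chosen later, declare
\[
B=\{a\in A:\rho_+(a)\le T\}\quad\text{and}\quad C=A\setminus B.
\]
Since restricting to $B$ can only shrink the weights $\rho_+$, one immediately obtains
\[
E_+(B)=\sum_{a\in B}\rho_+(a;B)\le |B|\,T\le |A|\,T,
\]
which is of the required order.

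The delicate step is to bound $E_\times(C)$. Every $a\in C$ satisfies $\rho_+(a;A)>T$, so $C$ is an additively rich subset of $A$. Sum-product philosophy predicts that such $C$ must be multiplicatively sparse. Concretely, one would first pass to a dyadic level of the weights $\rho_+(a)$, then run a Balog--Szemer\'edi--Gowers-type argument on the additive energy supported inside $C$ to produce a large subset $C'\subseteq C$ of small additive doubling, and finally apply a quantitative sum-product estimate in the spirit of Solymosi or Konyagin--Shkredov (as cited in the introduction) to conclude that $E_\times(C')$ is small. Iterating the procedure on the residual elements $C\setminus C'$ and collecting dyadic contributions is what produces the logarithmic factor of exponent $1-\del$. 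The specific value $\del=2/33$ emerges from balancing the choice of $T$ against the quantitative sum-product input.

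The cross-energy estimates follow from Cauchy--Schwarz. Writing
\[
E_+(B,C)=\sum_x r_{B-B}(x)\,r_{C-C}(x)\le \bigl(E_+(B)\,E_+(C)\bigr)^{1/2},
\]
and using the trivial bound $E_+(C)\le |C|^3\le |A|^3$, the required estimate for $E_+(B,C)$ drops out as
\[
E_+(B,C)\le \bigl(|A|^{3-\del}(\log |A|)^{1-\del}\cdot |A|^3\bigr)^{1/2}=|A|^{3-\del/2}(\log |A|)^{(1-\del)/2}.
\]
The bound for $E_\times(B,C)$ is identical, using the trivial estimate $E_\times(B)\le |A|^3$ in place of $E_+(C)$.

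The main obstacle is the sum-product input that controls $E_\times(C)$ on the additively-rich part. This is where the precise exponent $\del=2/33$ must be extracted, by balancing the richness threshold against the best available quantitative sum-product estimate for sets carrying substantial additive structure; everything else in the argument is organisational, once this ingredient is in place.
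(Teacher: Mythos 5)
Your decomposition is fundamentally different from the paper's, and the difference is where the argument breaks down. You split $A$ by a pointwise threshold on $\rho_+(a;A)$, putting the ``additively quiet'' elements into $B$ and the ``additively loud'' ones into $C$. The bound $E_+(B)\le |A|T$ is fine (indeed it gives $E_+(B)\ll |A|^{3-\del}$ with no log factor at all). But the step you yourself flag as ``the main obstacle'' is a genuine gap, and the heuristic you invoke to bridge it does not hold. Knowing that every $a\in C$ satisfies $\rho_+(a;A)>T$ tells you that each element of $C$ sits in many additive quadruples \emph{of $A$}; it does not tell you that $E_+(C)$ is large, since those quadruples may have their other three coordinates in $B$. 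Without a lower bound on $E_+(C)$, Balog--Szemer\'edi--Gowers gives you nothing. And even if $E_+(C)$ were large, a single BSG application produces one structured subset $C'\subseteq C$ with small doubling, but it makes no claim about $C\setminus C'$, which need not be additively rich, so the iteration you sketch on the residual has no engine to keep running. More to the point, ``additively rich therefore multiplicatively sparse'' is exactly the false na\"ive implication the paper sets out to refute: the example $A=\{0,\dots,N-1\}\cup\{N,N^2,\dots,N^N\}$ in the introduction has both energies of order $|A|^3$, and one can rig similar examples where a subset of high pointwise $\rho_+$ still carries large multiplicative energy.

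The paper avoids this trap by building $C$ \emph{constructively} as a union of sets it knows to have small doubling. Starting from $B_0=A$, it repeatedly applies a BSG-plus-Pl\"unnecke lemma to the \emph{current remainder} $B_k$: if $E_+(B_k)$ is still large, extract a large subset $A_{k+1}\subseteq B_k$ with $|A_{k+1}+A_{k+1}|$ small, move it into $C$, and continue. The stopping condition $E_+(B_K)\le N^{3-\del}(\log N)^\tet$ gives the additive bound for $B=B_K$ for free. The multiplicative bound for $C=\bigcup_j A_j$ then comes from the fact that each $A_j$ has small sumset: Solymosi's estimate $E_\times(A_i,A_j)\ll |A_i+A_i|\,|A_j+A_j|\log N$ controls each pair, and a dyadic grouping by cardinality together with the Cauchy--Schwarz subadditivity of $E_\times$ over unions assembles these into a bound for $E_\times(C)$. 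The exponent $\del=2/33$ falls out of balancing the size of the extracted pieces against the doubling they are allowed. In short, the paper's $C$ is small in multiplicative energy because it is \emph{built} out of small-doubling blocks, not because its elements are additively prominent in $A$. Your cross-energy Cauchy--Schwarz step, by contrast, is exactly the paper's argument and is correct as stated.
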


This theorem shows that any finite set of real numbers can be split into a highly 
non-additive part and a highly non-multiplicative part, and indeed, at least half of the set is 
either highly non-additive or highly non-multiplicative. Moreover, this decomposition has a 
doubly orthogonal flavour, the respective parts $B$ and $C$ being approximately orthogonal 
in terms both of their mutual additive energy, and also their mutual multiplicative energy.\par

It is tempting to conjecture that such decompositions should exist having the property that 
$\max\{ E_+(B),E_\times (C)\}\ll |A|^{2+\eps}$, for any $\eps>0$. By applying 
(\ref{1.1}) and its multiplicative analogue, such would imply the Erd\H os-Szemer\'edi 
conjecture in full. However, as we demonstrate in \S2, this tempting conjecture is 
over-ambitious. Let us describe the exponent $\bet$ as being a {\it permissible low-energy 
decomposition exponent} when, for each $\eps>0$ and for all sufficiently large finite 
subsets $A$ of $\dbR$, there exist disjoint sets $B$ and $C$, with $A=B\cup C$ and
$$\max \{ E_+(B),E_\times (C)\}\le |A|^{2+\bet+\eps}.$$

\begin{theorem}\label{theorem1.2}
The infimum $\kap$ of all permissible low-energy decomposition exponents satisfies 
$\tfrac{1}{3}\le \kap\le \tfrac{31}{33}$.
\end{theorem}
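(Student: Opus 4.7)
The upper bound $\kap \le 31/33$ is immediate from Theorem~\ref{theorem1.1}. Its conclusion $\max\{E_+(B), E_\times(C)\} \ll |A|^{3-\delta}(\log|A|)^{1-\delta}$ with $\delta = 2/33$ gives $\max \le |A|^{2 + 31/33 + \eps}$ for any fixed $\eps > 0$ and all sufficiently large $A$, so $\bet = 31/33$ is a permissible exponent.

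For the lower bound $\kap \ge 1/3$ my plan is to exhibit an explicit obstruction family. Fix a large integer $N$, put $M = \lceil N^{1/2}\rceil$, pick any integer $q > N^2$, and set
\[
A = \{ nq^i : 1 \le n \le N,\ 0 \le i \le M - 1 \}.
\]
The hypothesis on $q$ lets one recover the pair $(n, i)$ from $nq^i$ by comparing $q$-adic valuations, so $|A| = NM \asymp N^{3/2}$; moreover each column $K_i = q^i \cdot \{1, \ldots, N\}$ is an arithmetic progression of length $N$.

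Given any partition $A = B \cup C$, set $B_i = B \cap K_i$. Because $B_i$ sits in an arithmetic progression of length $N$, Cauchy-Schwarz yields $E_+(B_i) \ge |B_i|^4/(2N)$, and restricting $E_+(B)$ to within-column additive quadruples, together with the power-mean inequality across the $M$ columns, gives
\[
E_+(B) \;\ge\; \sum_{i=0}^{M-1} E_+(B_i) \;\ge\; \frac{|B|^4}{2 N M^3}.
\]
On the multiplicative side, the same $q$-adic separation shows every element of $A \cdot A$ is uniquely determined by the pair $(n_1 n_2,\, i_1 + i_2)$, whence $|A \cdot A| \le 2N^2 M$; Cauchy-Schwarz then gives $E_\times(C) \ge |C|^4/(2 N^2 M)$.

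To conclude, one of $|B|, |C|$ is at least $|A|/2$. Because $M^2 \asymp N$ one has $NM^3 \asymp N^2 M \asymp N^{5/2}$ and $|A|^4 \asymp N^6$, so each of the two displays becomes $\gg N^{7/2} \asymp |A|^{7/3}$ in the corresponding regime. Hence $\max\{E_+(B), E_\times(C)\} \gg |A|^{7/3}$ for every partition, and letting $N \to \infty$ shows no exponent below $1/3$ is permissible. The main delicate point is the product-set estimate $|A \cdot A| \le 2N^2 M$, which relies on the $q$-adic argument to forbid accidental multiplicative collisions; the choice $M = \lceil N^{1/2}\rceil$ is then forced by the need to equalise the additive and multiplicative energy bounds, yielding the exponent $7/3$.
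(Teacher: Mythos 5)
Your argument is correct and follows essentially the same approach as the paper. The set $A=\{nq^i: n\le N,\ i< M\}$ with $M\asymp N^{1/2}$ and $q>N^2$ is the same type of grid as the paper's $A=\{(2m-1)2^n: m\le N^{2/3},\ n\le N^{1/3}\}$ (in terms of $|A|$, both are $|A|^{2/3}\times|A|^{1/3}$ grids with arithmetic rows and geometrically separated columns), and the two key estimates --- bounding $E_+(B)$ from below via within-column quadruples and Cauchy--Schwarz, and bounding $E_\times(C)$ from below by controlling $|A\cdot A|$ and applying Cauchy--Schwarz --- are exactly those in the paper; your use of power-mean/convexity in place of the paper's explicit averaging over columns is only a cosmetic difference.
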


In particular, there exist arbitrarily large finite subsets $A$ of $\dbR$ for which every 
decomposition into two parts $B$ and $C$ satisfies the lower bound
$$\max\{ E_+(B),E_\times(C)\}\gg |A|^{7/3}.$$
The problem of determining the infimal exponent $\kap$ seems interesting, as well as 
very delicate, and we do not have a reasonable conjecture as to its value.\par

Our methods extend naturally to other settings, with obvious adjustments to our previous 
definitions concerning additive and multiplicative energies, and associated concepts. For 
example, an analogous argument yields a related conclusion in the setting of the finite field 
$\dbF_p$ having $p$ elements. This we establish in \S4. 

\begin{theorem}\label{theorem1.3} Let $p$ be a large prime, and suppose that 
$A\subseteq \dbF_p$ satisfies $|A|\le p^\alp(\log p)^\bet$, where we write
$$\alp=\frac{101}{161}\quad \text{and}\quad \bet=\frac{71}{161}.$$
Then, with $\del=4/101$, there exist disjoint subsets $B$ and $C$ of $A$, satisfying 
$A=B\cup C$ and
$$\max\{E_+(B),E_\times(C)\}\ll |A|^{3-\del}(\log |A|)^{1-\del/2}.$$
When $|A|\ge p^\alp(\log p)^\bet$, meanwhile, one has instead
$$\max\{E_+(B),E_\times(C)\}\ll |A|^3(|A|/p)^{1/15}(\log |A|)^{14/15}.$$
\end{theorem}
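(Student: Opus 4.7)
The plan is to transfer the proof of Theorem \ref{theorem1.1} (to be carried out in \S3) to the $\dbF_p$ setting, with the sole substantive change being the replacement of the Szemer\'edi--Trotter theorem by an incidence-geometric bound valid over $\dbF_p$. The two regimes in the statement correspond to the two terms of such a bound: below the threshold $|A| \le p^\alp(\log p)^\bet$ one is in the ``real-like'' regime where the primary incidence term dominates, whereas above it the bound saturates and a loss of the shape $(|A|/p)^{1/15}$ appears.

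First I would mimic the dyadic decomposition of \S3: partition $A \cdot A$ into level sets $\calP_j = \{ x \in A\cdot A : 2^j \le r_{A\cdot A}(x) < 2^{j+1}\}$, together with the analogous additive levels $\calQ_k$ for $A + A$, so that $E_\times(A) \asymp \sum_j 2^{2j}|\calP_j|$ and $E_+(A) \asymp \sum_k 2^{2k}|\calQ_k|$. A double pigeonhole over the $O(\log|A|)$ relevant scales selects a dominant additive scale $k_0$ and a dominant multiplicative scale $j_0$. The subset $C \subseteq A$ is then built from those elements of $A$ incident to many products at level $j_0$ (following the greedy/Elekes-type extraction of the real case), and $B := A \setminus C$ will automatically be deficient in additive coincidences at level $k_0$, and so have small $E_+(B)$.

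Second, I would bound $|\calP_{j_0}|$ (and dually $|\calQ_{k_0}|$) by an incidence estimate over $\dbF_p$, in place of Szemer\'edi--Trotter. The natural input is the Stevens--de Zeeuw bound
\[ I(\calP,\calL) \ll (|\calP||\calL|)^{11/15} + |\calP| + |\calL|, \]
valid for points and lines in $\dbF_p^2$ under a size hypothesis of the form $|\calP|^{?}|\calL|^{?} \ll p^{?}$. Applied through the Elekes-style configuration of lines $y = a(x-b)$ with $a,b$ ranging over popular products/sums, this yields a bound on $|\calP_{j_0}|$ of the form $|A|^{c_1} 2^{-c_2 j_0}$ so long as the size constraint holds, i.e.\ so long as $|A| \le p^\alp(\log p)^\bet$ with $\alp = 101/161$, $\bet = 71/161$. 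Optimizing the dyadic balance then gives the gain $\del = 4/101$ and the logarithmic factor $(\log|A|)^{1-\del/2}$.

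Third, in the complementary regime $|A| \ge p^\alp(\log p)^\bet$, the secondary term $|\calP| + |\calL|$ of the incidence bound takes over, and the incidence count degrades by a factor of $(|A|/p)^{1/15}$; feeding this into the same dyadic optimization yields the second stated estimate. The main obstacle is the bookkeeping of exponents: the constants $101/161$, $71/161$, $4/101$ and $1/15$ all arise from a single joint optimization balancing the dyadic levels against the two terms of the $\dbF_p$ incidence bound, and a careful tracking through the greedy extraction is required to avoid log losses larger than those advertised. The underlying combinatorics, however, is identical to that of Theorem~\ref{theorem1.1}.
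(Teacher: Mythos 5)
Your proposal does not follow the paper's route, and as written it has a real gap.

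The paper does not use a dyadic level-set pigeonhole to build the decomposition. It runs the same iterative Balog--Szemer\'edi--Gowers extraction as in the proof of Theorem~\ref{theorem1.1}, but with the roles of addition and multiplication \emph{interchanged}: while $E_\times(C_k)$ is large, Lemma~\ref{lemma4.3} (which is Lemma~\ref{lemma3.3} transferred by a discrete logarithm $x\mapsto g^r$) produces a subset $A_{k+1}\subset C_k$ with small product set $|A_{k+1}\cdot A_{k+1}|$, which is moved into $B$; termination forces $E_\times(C)$ to be small. The additive energy $E_+(B)$ is then controlled piece-by-piece via Lemma~\ref{lemma4.4} and Lemma~\ref{lemma4.6}, the latter being the $\dbF_p$ substitute for Solymosi's Lemma~\ref{lemma3.5}: it bounds $E_+(A)$ in terms of $|A\cdot A|$ and comes from the Roche-Newton--Rudnev--Shkredov estimate (Lemma~\ref{lemma4.5}), which in turn rests on Rudnev's point--plane incidence theorem in three dimensions, not on a point--line incidence bound such as Stevens--de Zeeuw in $\dbF_p^2$. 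The two regimes in the statement come from the two terms $|A\cdot A|^{3/2}|A|$ and $p^{-1}|A\cdot A|^2|A|^2$ of Lemma~\ref{lemma4.6} being dominant, not from a secondary term $|\calP|+|\calL|$ of a point--line bound.

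The substantive problem with your sketch is the claim that, after selecting the popular multiplicative level $j_0$ and setting $C$ to be the elements incident to many products at that level, ``$B := A\setminus C$ will automatically be deficient in additive coincidences at level $k_0$, and so have small $E_+(B)$.'' There is no reason for this to hold: an element can simultaneously participate in many popular products and many popular sums, so discarding multiplicatively active elements does not a priori reduce the additive energy of the complement. This is exactly where the BSG machinery is doing real work in the paper --- the extraction of structured subsets with small sumset/product set is what makes the leftover set provably low-energy, and the dyadic pigeonhole does not replace it. In addition, the sign of your construction looks reversed (a set built from elements with many product representations should have \emph{large}, not small, $E_\times$), and the premise that the ``sole substantive change'' from the real case is swapping in a different incidence theorem misses that the paper also inverts which side of the decomposition the BSG iteration acts on.
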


In the second of these conclusions, the upper bound for $\max\{E_+(B),E_\times(C)\}$ 
becomes non-trivial only when $|A|$ is smaller than about $p(\log p)^{-14}$. It may be 
worth emphasising that a bound here uniform in $p$ is certainly not available, for a simple 
argument presented at the end of \S4 confirms that in the setting of the finite field 
$\dbF_p$, one always has
\begin{equation}\label{1.x}
E_+(B)\ge |B|^4/p\quad \text{and}\quad E_\times(C)\ge |C|^4/p,
\end{equation}
whence
$$\max\{E_+(B),E_\times(C)\}\gg |A|^3(|A|/p).$$

We mention a prototype application for the low-energy decomposition theorem recorded in 
Theorem \ref{theorem1.3}. In his Ph.D. thesis at the University of Toronto (see 
\cite[\S4]{Han2015}), Brendan Hanson gives a non-trivial bound for the character sum
\begin{equation}\label{1.3}
\sum_{a\in A}\sum_{b\in B}\sum_{c\in C}\sum_{d\in D} \chi(a+b+cd),
\end{equation}
where $\chi$ is a non-principal character modulo $p$, and $A, B, C, D$ are subsets of the 
finite field $\dbF_p$. All four sets can be somewhat smaller than $\sqrt{p}$ in his work, 
and hence he breaks the ``square-root barrier''. Hanson makes use of different arguments 
according to whether $E_+(C)$ or $E_\times(C)$ is small. Such an argument would 
naturally utilise a conclusion of the shape recorded in Theorem \ref{theorem1.3}: the sum 
(\ref{1.3}) can be split into two sums by writing $C=C_1\cup C_2$, with $E_+(C_1)$ and 
$E_\times(C_2)$ both small, and then each sum may be estimated in turn by appeal to 
one or other of Hanson's arguments.\par

It is not hard to extend our results from the above notions of energy to the analogous 
concept of $k$-fold energy. Given finite subsets $A_i$ of real numbers, define  
\begin{align*}
E_+(A_1,\dots,A_k)&=\text{card}\{\bfa,\bfa'\in A_1\times \ldots \times 
A_k:a_1+\dots+a_k=a'_1+\dots+a'_k\},\\
E_\times(A_1,\dots,A_k)&=\text{card}\{\bfa,\bfa'\in A_1\times \ldots \times A_k:
a_1\cdots a_k=a_1'\cdots a_k'\}.
\end{align*}
For the sake of convenience, we then put
$$E^{(k)}_+(A)=E_+(\underbrace{A,\dots,A}_k)\quad \text{and}\quad 
E^{(k)}_\times (A)=E_\times(\underbrace{A,\dots,A}_k).$$
As an immediate consequence of Theorem \ref{theorem1.1}, in \S5 we obtain the following 
low-energy decomposition theorem for $k$-fold energies.

\begin{theorem}\label{theorem1.4}
Let $A$ be a finite subset of the real numbers, and suppose that $m$ and $n$ are integers 
with $m\ge 2$ and $n\ge 2$. Then, with $\del=\frac{2}{33}$, there exist disjoint subsets 
$B$ and $C$ of $A$, with $A=B\cup C$, 
$$E^{(m)}_+(B)\ll |A|^{2m-1-\del}(\log |A|)^{1-\del}$$
and
$$E^{(n)}_\times(C)\ll |A|^{2n-1-\del}(\log |A|)^{1-\del}.$$
\end{theorem}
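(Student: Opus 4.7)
The plan is to deduce Theorem~\ref{theorem1.4} directly from Theorem~\ref{theorem1.1} by establishing, for any finite set $S\subset \dbR$ and integer $k\ge 2$, the two reductions
$$E^{(k)}_+(S)\le |S|^{2k-4}E_+(S)\quad\text{and}\quad E^{(k)}_\times(S)\le |S|^{2k-4}E_\times(S).$$
Granted these, Theorem~\ref{theorem1.1} applied with $\del=\tfrac{2}{33}$ supplies disjoint $B,C\subset A$ with $A=B\cup C$ and
$$\max\{E_+(B),E_\times(C)\}\ll |A|^{3-\del}(\log|A|)^{1-\del},$$
and multiplying these through by $|B|^{2m-4}$ and $|C|^{2n-4}$ respectively (each trivially at most the corresponding power of $|A|$) delivers exactly the claimed bounds $|A|^{2m-1-\del}(\log|A|)^{1-\del}$ and $|A|^{2n-1-\del}(\log|A|)^{1-\del}$.

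To prove the additive reduction, I would write $r_{kS}(x)$ for the number of $(a_1,\dots,a_k)\in S^k$ with $a_1+\dots+a_k=x$, so that $E^{(k)}_+(S)=\sum_x r_{kS}(x)^2$. Peeling off the last $k-2$ coordinates gives
$$r_{kS}(x)=\sum_{(a_3,\dots,a_k)\in S^{k-2}}r_{2S}(x-a_3-\dots-a_k),$$
and Cauchy's inequality applied to this sum of $|S|^{k-2}$ terms yields
$$r_{kS}(x)^2\le |S|^{k-2}\sum_{(a_3,\dots,a_k)\in S^{k-2}}r_{2S}(x-a_3-\dots-a_k)^2.$$
Summing over $x$ and exploiting the translation invariance $\sum_x r_{2S}(x-y)^2=E_+(S)$ then produces the bound $E^{(k)}_+(S)\le |S|^{2k-4}E_+(S)$. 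The multiplicative inequality follows by the verbatim argument with multiplicative representations in place of additive ones; equivalently, it is the additive statement applied to the image of $S\setminus\{0\}$ under the logarithm, with the contribution of $0$ to $E^{(k)}_\times(S)$ being only of lower order.

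There is essentially no obstacle in this programme: the entire deduction is a formal consequence of Theorem~\ref{theorem1.1} combined with a routine Cauchy-Schwarz reduction from $k$-fold to ordinary energy. The only point requiring attention is the exponent bookkeeping, which simply confirms that $|S|^{2k-4}\cdot|A|^{3-\del}(\log|A|)^{1-\del}\le |A|^{2k-1-\del}(\log|A|)^{1-\del}$ for each of the two cases $k=m$ and $k=n$.
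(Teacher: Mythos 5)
Your proposal is correct and takes essentially the same route as the paper: reduce $k$-fold energy to ordinary energy via a single Cauchy--Schwarz step, giving $E^{(k)}_+(S)\le|S|^{2k-4}E_+(S)$ and its multiplicative analogue, then invoke Theorem~\ref{theorem1.1} and do the exponent bookkeeping. The paper organizes the Cauchy--Schwarz slightly differently (bounding $E_+(A_1,\dots,A_k)$ by $|A_3|^2\cdots|A_k|^2\max_b U(b)$ with $U(b)\le E_+(A_1,A_2)$ rather than squaring $r_{kS}(x)$ directly), but the two are equivalent and deliver the identical intermediate bound.
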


Our approach to proving this theorem involves a reduction to the $2$-fold energy central to 
Theorem \ref{theorem1.1}, and fails to make any use of the richer structure available for the 
$k$-fold energy. It seems unlikely that this approach is particularly effective. Rather, we 
simply want to point out that low-energy decomposition theorems are available for 
$k$-fold energies. One would like to see a much sharper result, involving a saving in the 
exponent which grows with $m$ (or $n$) in place of the constant saving $\frac{2}{33}$ in 
the conclusion of Theorem \ref{theorem1.4}. We note in this context that a construction 
analogous to that in \S2 delivering Theorem \ref{theorem1.2} shows only that there exist 
arbitrarily large finite subsets $A$ of $\dbR$ for which, for all natural numbers $m$ and $n$ 
with $m\ge 2$ and $n\ge 2$, and for every decomposition of $A$ into two parts $B$ and 
$C$, one has either 
$$E^{(m)}_+(B)\gg |A|^{(4m-1)/3}\quad \text{or}\quad E^{(n)}_\times(C)\gg 
|A|^{(4n-1)/3}.$$

\par Throughout this paper, we write $\lceil \tet\rceil$ for the smallest integer no smaller than 
$\tet$, and $\lfloor \tet\rfloor$ for the largest integer not exceeding $\tet$. Also, when 
describing ranges for integers in the definitions of sets, for example, we write $n\le N$ to 
denote the constraint $1\le n\le N$.\par

The authors wish to express their gratitude to Misha Rudnev for an insightful suggestion 
relevant to low-energy decompositions in finite fields. His suggestion of the use of a relation 
of the shape $E_+(A)\ll |A\cdot A|^{3/2}|A|$ led us to formulate Lemma \ref{lemma4.6}, 
and fostered the significant improvement of our previous bounds in Theorem \ref{theorem4.2} 
now recorded in Theorem \ref{theorem1.3}. We thank him for his generous contribution to this paper, 
and also for his comments on an earlier draft of this paper.

\section{Permissible low-energy decomposition exponents} We begin our exploration of 
low-energy decompositions with the proof of Theorem \ref{theorem1.2}. Here, we 
temporarily assume the truth of Theorem \ref{theorem1.1}, deferring its proof to \S3. 
Thus, we may suppose that, for each positive number $\eps$, there exists a permissible 
low-energy decomposition exponent $\bet$ with $\bet \le \frac{31}{33}+\eps$. It follows 
that the infimum $\kap$ of all such exponents satisfies $\kap\le \frac{31}{33}$. 
Consequently, it remains only to show that $\kap\ge \frac{1}{3}$.\par

Fix a large positive integer $N$, and put
$$A=\{(2m-1)2^n: \text{$m\le N^{2/3}$ and $n\le N^{1/3}$}\}.$$
Thus, one has $|A|=N+O(N^{2/3})$. We claim that whenever $B\subseteq A$ and 
$|B|\ge |A|/2$, then one necessarily has both
\begin{equation}\label{2.1}
E_+(B)\gg N^{7/3}\quad \text{and}\quad E_\times(B)\gg N^{7/3}.
\end{equation}

\par Suppose that $B\subseteq A$ and $|B|\ge |A|/2$. We first examine the multiplicative 
energy $E_\times(B)$. Note that
$$B\cdot B\subseteq \{(2m-1)2^n:\text{$m\le 2N^{4/3}$ and $n\le 2N^{1/3}$}\},$$
so that $|B\cdot B|\le 4N^{5/3}$. We therefore deduce from Cauchy's inequality that
$$(N/2)^4\le |B|^4=\biggl(\sum_{x\in B\cdot B}r_{B\cdot B}(x)\biggr)^2\le |B\cdot B|
\sum_{x\in B\cdot B}r_{B\cdot B}(x)^2\le 4N^{5/3}E_\times(B),$$
whence $E_\times(B)\geq N^{7/3}/2^6$.

Our discussion of the corresponding additive energy $E_+(B)$ entails more effort. For a fixed 
natural number $n$, let
$$M_n=\{m\in \dbN: \text{$m\le N^{2/3}$ and $(2m-1)2^n\in B$}\}.$$
Also, define
$$\calN=\{ n\in \dbN:|M_n|\ge \tfrac{1}{4}N^{2/3}\}.$$
Then it follows by means of an obvious averaging argument that 
$$|\calN|>\tfrac{1}{3}N^{1/3}+O(1).$$
Indeed, one has
$$\tfrac{1}{2}N+O(N^{2/3})<|B|=\sum_{n\in \dbN}|M_n|\le N^{2/3}|\calN|+
\tfrac{1}{4}N^{2/3}(N^{1/3}- |\calN|),$$
from which the desired conclusion follows. Note also that for all natural numbers $n$, the 
sumset $M_n+M_n$ is a subset of the natural numbers not exceeding $2N^{2/3}$, and 
hence $|M_n+M_n|\leq 2N^{2/3}$. It therefore follows from Cauchy's inequality, much as 
before, that for any fixed $n\in \dbN$ we have
$$|M_n|^4\le |M_n+M_n|E_+(M_n),$$
and we arrive at the lower bound
$$E_+(M_n)\ge \left( \tfrac{1}{4}N^{2/3}\right)^4/(2N^{2/3})=2^{-9}N^2\quad 
(n\in \calN).$$
We thus conclude that
\begin{align*}
E_+(B)&\ge \sum_{n\in\calN}\text{card}\biggl\{\bfm\in M_n^4: 
\sum_{i=1}^4(-1)^{i-1}(2m_i-1)2^n=0\biggr\}\\
&=\sum_{n\in\calN}E_+(M_n)\ge \tfrac{1}{3}2^{-9}N^{7/3}+O(N^2).
\end{align*}

By combining these conclusions, we see that when $B\subseteq A$ and $|B|\ge |A|/2$, then 
one necessarily has both of the lower bounds (\ref{2.1}), confirming our opening claim. In 
particular, whenever $\bet<\tfrac{1}{3}$, there exist arbitrarily large finite subsets $A$ of 
$\dbR$ for which, for some positive number $\eps$, every decomposition into two parts $B$ 
and $C$ satisfies the lower bound 
$\max \{ E_+(B),E_\times(C)\}\gg |A|^{2+\bet+\eps}$. Consequently, the infimum $\kap$ of all permissible low-energy decomposition exponents 
satisfies $\kap\ge \tfrac{1}{3}$. This completes our proof of Theorem \ref{theorem1.2}.

\section{Low-energy decompositions} Our goal in this section is the proof of the low-energy decomposition theorem recorded in Theorem \ref{theorem1.1}. The key ingredient in 
our proof of the latter is a version of the Balog-Szemer\'edi-Gowers lemma, which gives a 
quantitative version of the assertion that when $E_+(B)$ is large, then there exists a large 
subset $A'$ of $B$ for which $|A'+A'|$ is small (see \cite[Theorem 2]{Bal2007}). In order to 
extract the sharpest accessible conclusion, we apply the Balog-Szemer\'edi-Gowers lemma in 
the form given by Schoen (see \cite[Theorem 1.2]{Sch2014}). 

\begin{lemma}\label{lemma3.1} Let $B$ be a non-empty finite subset of an abelian group 
with $E_+(B)=\alpha |B|^3$. Then there exist subsets $A'$ and $B'$ of $B$ such that
$$\min\left\{|A'|,|B'|\right\}\gg \alpha^{3/4}\left(\log(1/\alpha)\right)^{-5/4}|B|$$
and
$$|A'-B'|\ll \alpha^{-7/2}\left(\log(1/\alpha)\right)^{5/2}\left(|A'||B'|\right)^{1/2}.$$
\end{lemma}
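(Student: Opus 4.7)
The plan is to prove this as a refined Balog--Szemer\'edi--Gowers lemma, following Schoen's argument. First, I would rewrite the hypothesis in terms of representation functions: $E_+(B) = \sum_d r_{B-B}(d)^2 = \alpha|B|^3$, where $r_{B-B}(d)$ counts representations $d = x-y$ with $x,y \in B$. A dyadic pigeonholing isolates a threshold $\tau$ and a set $D \subseteq B-B$ of ``popular'' differences with $r_{B-B}(d) \geq \tau$ for each $d \in D$, such that the contribution of $D$ to the energy is $\gg \alpha|B|^3/\log(1/\alpha)$. Balancing via $\sum_{d\in D} r_{B-B}(d) \leq |B|^2$ gives roughly $\tau \asymp \alpha|B|$ and $|D|\tau \asymp |B|^2$, up to logarithmic factors.

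Next I would form the popular difference graph $G$ on vertex set $B$, placing an edge between $x$ and $y$ whenever $x-y \in D$. This graph has edge density $\gg \alpha/\log(1/\alpha)$. The heart of BSG is a graph-theoretic lemma (in its sharp form, due to Sudakov--Szemer\'edi--Vu and refined by Schoen) that locates large subsets $A', B' \subseteq B$ with $|A'|, |B'| \gg \alpha^{3/4}(\log(1/\alpha))^{-5/4}|B|$ such that for every pair $(a',b') \in A' \times B'$ there are many length-two paths $a' \to v \to b'$ in $G$. The extraction proceeds by a dependent random choice argument: one samples a small random subset $S \subseteq B$, defines $A'$ (and analogously $B'$) to be vertices connected in $G$ to most of $S$, and then a second-moment computation combined with Markov's inequality delivers the claimed sizes and the length-two path count simultaneously.

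Given the length-two paths, the bound on $|A'-B'|$ follows by counting representations. For each $(a',b') \in A' \times B'$, a path $a' \to v \to b'$ writes $a' - b' = (a'-v) + (v-b')$ as a sum of two elements of $D$. Each popular difference in turn admits $\geq \tau$ representations as a difference of elements of $B$, so $a' - b'$ has many representations of the form $(x_1 - y_1) + (x_2 - y_2)$ with $x_i, y_i \in B$. Double counting quadruples in $B^4$ with a prescribed signed sum then upper bounds $|A'-B'|$ by $|B|^4$ divided by this minimum representation count, yielding the claimed $|A'-B'| \ll \alpha^{-7/2}(\log(1/\alpha))^{5/2}(|A'||B'|)^{1/2}$ after substituting the size estimates.

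The main obstacle, and the reason for invoking Schoen's refinement rather than the classical BSG lemma, lies in extracting the sharp exponents $3/4$ and $7/2$ with the correct logarithmic factors. Each pigeonholing and Cauchy--Schwarz step contributes a log, and the optimal balance between the sizes of $A', B'$ and the density of length-two paths requires a delicate graph-theoretic optimization rather than a blunt application of Pl\"unnecke--Ruzsa (which would produce substantially worse exponents). In practice, for the purposes of this paper it suffices to quote Schoen's Theorem 1.2 as a black box, with verification of the precise numerology deferred to \cite{Sch2014}.
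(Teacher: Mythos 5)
Your proposal lands in the same place as the paper: Lemma \ref{lemma3.1} is quoted directly as \cite[Theorem 1.2]{Sch2014} with no proof given, exactly as you recommend in your closing paragraph. The preceding sketch---dyadic pigeonholing of popular differences, the graph of popular differences with dependent random choice extracting $A',B'$ supporting many length-two paths, and the double-counting bound on $|A'-B'|$---is a faithful outline of Schoen's argument, but the paper does not reproduce it; citing the theorem as a black box is the intended and sufficient treatment.
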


In order to proceed further, we must modify the conclusion of this lemma so that it 
supplies a bound for $|A'+A'|$. This we achieve through a consequence of Pl\"unnecke's 
inequality.

\begin{lemma}\label{lemma3.2} Let $A$ and $B$ be non-empty finite subsets of an abelian 
group. Then we have $|A+A|\leq |A+B|^2/|B|$.
\end{lemma}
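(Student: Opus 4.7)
The plan is to deduce this bound from the Plünnecke--Ruzsa inequality, in the sharp form given by Petridis, which in fact makes the stated inequality a near-immediate corollary.

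Recall Petridis's lemma: for non-empty finite subsets $A$ and $B$ of an abelian group, if $X$ is a non-empty subset of $B$ minimising the ratio $|A+X|/|X|$, and we write $K=|A+X|/|X|$ for this minimum, then for every finite set $C$ one has
$$|A+X+C|\le K|X+C|.$$
I would take this as a black box (it has a short inductive proof on $|C|$) and apply it with $C=A$, obtaining
$$|A+X+A|\le K|X+A|.$$

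Next I would chain three trivial inequalities. First, since $X$ is non-empty, fixing any $x_0\in X$ yields $x_0+A+A\subseteq X+A+A$, so $|A+A|\le |A+X+A|$. Second, since $X\subseteq B$, we have $X+A\subseteq B+A$, so $|X+A|\le |A+B|$. Third, by the definition of $K$ as a minimum taken over non-empty subsets of $B$, the choice $X=B$ gives $K\le |A+B|/|B|$. Concatenating,
$$|A+A|\le |A+X+A|\le K|X+A|\le \frac{|A+B|}{|B|}\cdot |A+B|=\frac{|A+B|^2}{|B|},$$
which is the asserted inequality.

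There is no real obstacle here: once Petridis's lemma is in hand the argument is purely bookkeeping, and the only substantive content is Petridis's lemma itself. Since the paper is invoking this as a standard tool to convert the bound on $|A'-B'|$ supplied by Lemma~\ref{lemma3.1} into a bound on $|A'+A'|$, I would simply cite Petridis (or equivalently the Plünnecke--Ruzsa inequality) rather than reproducing the short inductive proof.
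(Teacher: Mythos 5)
Your proof is correct and takes essentially the same route as the paper: both deduce the inequality from Petridis's sharp Pl\"unnecke-type bound. The only cosmetic difference is that the paper invokes Petridis's Theorem 1.1 in the form ``there exists non-empty $X\subseteq B$ with $|X+2A|\le\alpha^2|X|$'' (with $\alpha=|A+B|/|B|$) and then bounds $|A+A|\le|X+2A|\le\alpha^2|X|\le\alpha^2|B|$, whereas you apply the underlying Petridis lemma once with $C=A$ and then use the trivial inclusions $X+A\subseteq B+A$ and $K\le|A+B|/|B|$; the two organizations are equivalent and give the identical bound.
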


\begin{proof} Let $A$ and $B$ be finite non-empty sets in an abelian group. Define $\alp$ 
by means of the relation $|A+B|=\alp |B|$. Then it follows from Pl\"unnecke's inequality, 
in the form given in the recent work of Petridis \cite[Theorem 1.1]{Pet2012}, that there 
exists a non-empty subset $X$ of $B$ such that $|X+hA|\le \alp^h|X|$. We apply this 
estimate in the special case $h=2$. We find that there exists a non-empty subset $X$ of 
$B$ such that
$$|A+A|\le |X+2A|\le \left( |A+B|/|B|\right)^2|X|\le \left( |A+B|/|B|\right)^2|B|,$$
and the desired conclusion follows.
\end{proof}

By combining Lemmata \ref{lemma3.1} and \ref{lemma3.2}, we obtain a version of the 
Balog-Szemer\'edi-Gowers lemma suitable for our application.

\begin{lemma}\label{lemma3.3} Let $G$ be an abelian group. Then there are positive 
constants $c_1$ and $c_2$, depending at most on $G$, with the following property. 
Suppose that $N$ is a sufficiently large natural number. Let $B$ be a non-empty finite 
subset of $G$ with $|B|\le N$, and suppose that $\del$ and $\tet$ are real numbers with 
$0<\del<1$. Then, either $E_+(B)\le N^{3-\delta}(\log N)^\tet$, or else there exists a 
subset $A'$ of $B$ such that
$$|A'|\ge c_1N^{1-3\delta/4}(\log N)^{(3\tet-5)/4}\quad \text{and}\quad |A'+A'|\le c_2
N^{7\delta}(\log N)^{5-7\tet}|A'|.$$
\end{lemma}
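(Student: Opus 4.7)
The plan is to deduce Lemma~\ref{lemma3.3} from Lemma~\ref{lemma3.1} and Lemma~\ref{lemma3.2} essentially by direct substitution, the only real work being a careful tracking of logarithmic factors.

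Suppose that $E_+(B)>N^{3-\del}(\log N)^\tet$, since otherwise the first alternative of the conclusion holds. Define $\alp$ by $E_+(B)=\alp|B|^3$, so that from $|B|\le N$ we obtain the crucial lower bound
$$\alp\,>\,N^{3-\del}(\log N)^\tet/|B|^3\,\ge\,N^{-\del}(\log N)^\tet.$$
Consequently $\alp^{-1}\le N^\del (\log N)^{-\tet}$, and taking logarithms gives $\log(1/\alp)\le \del \log N-\tet\log\log N\ll \log N$. Since also $\alp\le 1$, this means that within constants the factor $\log(1/\alp)$ is just $\log N$, which is the source of the powers of $\log N$ in the final conclusion.

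Next I would apply Lemma~\ref{lemma3.1} to obtain the subsets $A',B'\subseteq B$ with $\min\{|A'|,|B'|\}\gg \alp^{3/4}(\log(1/\alp))^{-5/4}|B|$ and $|A'-B'|\ll \alp^{-7/2}(\log(1/\alp))^{5/2}(|A'||B'|)^{1/2}$. To convert this into a bound on $|A'+A'|$, apply Lemma~\ref{lemma3.2} with the pair $(A',-B')$: since $A'+(-B')=A'-B'$ and $|{-B'}|=|B'|$, this yields
$$|A'+A'|\,\le\,|A'-B'|^2/|B'|\,\ll\,\alp^{-7}(\log(1/\alp))^5\,|A'|.$$
Now I would substitute the bounds $\alp^{-7}\le N^{7\del}(\log N)^{-7\tet}$ and $(\log(1/\alp))^5\ll (\log N)^5$ from the first paragraph to obtain $|A'+A'|\le c_2 N^{7\del}(\log N)^{5-7\tet}|A'|$, as required.

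For the lower bound on $|A'|$, I would rewrite $\alp^{3/4}|B|=(\alp|B|^3)^{3/4}|B|^{-5/4}$ and use $\alp|B|^3=E_+(B)>N^{3-\del}(\log N)^\tet$ together with $|B|\le N$, giving
$$|A'|\,\gg\,N^{3(3-\del)/4}(\log N)^{3\tet/4}(\log(1/\alp))^{-5/4}N^{-5/4}\,\gg\,N^{1-3\del/4}(\log N)^{(3\tet-5)/4},$$
again invoking $\log(1/\alp)\ll \log N$.

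The only real obstacle is the mild edge case in which $\alp$ is very close to $1$, so that $\log(1/\alp)$ degenerates. This is easily dispatched: if $\alp$ exceeds some absolute constant less than $1$, then $|B|^3\ll E_+(B)\le |B|^3$ forces $|B|\ll (N^{3-\del}(\log N)^\tet)^{1/3}$, and one may simply take $A'=B$, for which the bound $|A'+A'|\le |A'|^2$ comfortably satisfies the required inequality once $N$ is large enough and $c_1,c_2$ are appropriately chosen. Thus, up to adjusting constants, the argument above proves the lemma.
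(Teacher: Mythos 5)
Your main argument is correct and is essentially identical to the paper's proof: you set $\alpha = E_+(B)/|B|^3$, obtain $\alpha > N^{-\delta}(\log N)^\theta$ from the hypothesis $|B|\le N$, apply Lemma~\ref{lemma3.1} to obtain $A', B'$, then use Lemma~\ref{lemma3.2} applied to the pair $(A',-B')$ to pass from the difference-set bound to $|A'+A'|\le |A'-B'|^2/|B'|$, and finally substitute $\alpha^{-1} \ll N^{\delta}(\log N)^{-\theta}$ and $\log(1/\alpha)\ll \log N$. The computation of the lower bound on $|A'|$ via $\alpha^{3/4}|B| = E_+(B)^{3/4}|B|^{-5/4}$ together with $|B|\le N$ also matches the paper exactly.

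The only flaw is in your edge-case paragraph, and the error is a genuine reversal of direction. From $E_+(B) > N^{3-\delta}(\log N)^\theta$ and the trivial bound $E_+(B)\le |B|^3$ one gets $|B|^3 > N^{3-\delta}(\log N)^\theta$, i.e.\ a \emph{lower} bound $|B|\gg (N^{3-\delta}(\log N)^\theta)^{1/3}$, not the upper bound you claim; and $E_+(B)\asymp |B|^3$ by itself gives no upper bound on $|B|$ at all. Consequently taking $A' = B$ and using $|B+B|\le |B|^2$ does \emph{not} give $|A'+A'|\le c_2 N^{7\delta}(\log N)^{5-7\theta}|A'|$ for small $\delta$: that would require $|B|\ll N^{7\delta}(\log N)^{5-7\theta}$, whereas we've just seen $|B|\gg N^{1-\delta/3}(\log N)^{\theta/3}$, and $1-\delta/3 > 7\delta$ for $\delta < 3/22$, which covers the value $\delta = 2/33$ actually used. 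So the edge-case patch as written fails precisely in the regime of interest. Fortunately, the edge case does not need to be addressed at all: the paper's own proof proceeds without it, and the form of Lemma~\ref{lemma3.1} makes clear that the logarithmic factor there cannot be taken literally near $\alpha = 1$ (otherwise the lower bound on $\min\{|A'|,|B'|\}$ would exceed $|B|$), so the convention in the source is necessarily that $\log(1/\alpha)$ is bounded below. Once that is understood, the single estimate $\log(1/\alpha)\ll\log N$ is all that is needed, and your first three paragraphs already give a complete proof.
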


\begin{proof} Suppose that $B$, $\del$ and $\tet$ satisfy the hypotheses of the statement 
of the lemma. If $E_+(B)\le N^{3-\del}(\log N)^\tet$, then there is nothing to prove, so 
we may suppose that $E_+(B)>N^{3-\del}(\log N)^\tet$. Put $\alp=E_+(B)/|B|^3$. Then 
we have $$\alpha>N^{3-\delta}(\log N)^\tet/|B|^3\ge N^{-\delta}(\log N)^\tet,$$
and we deduce from Lemma \ref{lemma3.1} that there exist subsets $A'$ and $B'$ of $B$ 
with
\begin{align*}
\min\left\{ |A'|,|B'|\right\}&\gg \left( N^{3-\del}(\log N)^\tet /|B|^3\right)^{3/4}\left(\log 
N\right)^{-5/4}|B|\\
&\gg N^{1-3\del/4}(\log N)^{(3\tet-5)/4}
\end{align*}
and
$$|A'-B'|\ll \alp^{-7/2}\left( \log (1/\alp)\right)^{5/2}\left( |A'||B'|\right)^{1/2}.$$
However, Lemma \ref{lemma3.2} leads from the latter bound to the relation
$$|A'+A'|\le |A'-B'|^2/|B'|\ll \alpha^{-7}\left(\log(1/\alp)\right)^5|A'|\ll N^{7\delta}
(\log N)^{5-7\tet}|A'|.$$
The conclusion of the lemma now follows.
\end{proof}

We also need the fact that the multiplicative energy between two sets exhibits some 
subadditive behaviour. This is folklore, and follows as a simple consequence of Cauchy's 
inequality.

\begin{lemma}\label{lemma3.4} Let $A_j$ $(1\le j\le J)$ and $B_k$ $(1\le k\le K)$ be finite 
subsets of a ring. Then one has
$$E_\times\biggl( \bigcup_{j=1}^JA_j,\bigcup_{k=1}^KB_k\biggr)\le 
JK\sum_{j=1}^J\sum_{k=1}^KE_\times(A_j,B_k).$$
\end{lemma}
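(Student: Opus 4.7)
The plan is to write the multiplicative energy $E_\times(A,B)$ (for $A=\bigcup_j A_j$ and $B=\bigcup_k B_k$) as a sum of squared representation counts, bound each representation count by a sum over the pieces $A_j\times B_k$, and then apply Cauchy's inequality with $JK$ terms.

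More precisely, I would begin by writing
$$E_\times(A,B)=\sum_x r_{A\cdot B}(x)^2,$$
where the sum is over $x$ in the product set $A\cdot B$. Any pair $(a,b)\in A\times B$ with $ab=x$ satisfies $a\in A_j$ for some $j$ and $b\in B_k$ for some $k$, so each such pair is counted at least once on the right-hand side of
$$r_{A\cdot B}(x)\le \sum_{j=1}^{J}\sum_{k=1}^{K}r_{A_j\cdot B_k}(x).$$
(Equality need not hold since the $A_j$ and $B_k$ are not assumed disjoint, but the inequality is all we need.)

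Next I apply Cauchy's inequality to the $JK$-term sum on the right, yielding
$$r_{A\cdot B}(x)^2\le JK\sum_{j=1}^{J}\sum_{k=1}^{K}r_{A_j\cdot B_k}(x)^2.$$
Summing over $x$ and interchanging the order of summation gives
$$E_\times(A,B)\le JK\sum_{j=1}^{J}\sum_{k=1}^{K}\sum_x r_{A_j\cdot B_k}(x)^2=JK\sum_{j=1}^{J}\sum_{k=1}^{K}E_\times(A_j,B_k),$$
which is the desired conclusion. There is no real obstacle here; the only thing to check is that the representation-count bound is valid when the pieces $A_j$ (or $B_k$) overlap, and as noted above the inequality direction is the favourable one for our purposes.
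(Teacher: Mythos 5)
Your proof is correct, and it is exactly the "simple consequence of Cauchy's inequality" that the paper alludes to without spelling out; the key observation that the representation-count bound is favourable in the overlapping case is also right. Nothing to add.
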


Finally, we recall a generalisation of the key ingredient of Solymosi \cite{Sol2009} in proving 
his sum-product estimate. 

\begin{lemma}\label{lemma3.5} Let $A$ and $B$ be non-empty finite subsets of the real 
numbers. Then
$$E_\times(A,B)\le 4|A+A|\cdot |B+B|\lceil \log(\min\{|A|,|B|\})\rceil .$$ 
\end{lemma}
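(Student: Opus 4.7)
My plan is to adapt Solymosi's geometric argument from \cite{Sol2009} to the two-set setting. The first step is a geometric reformulation of $E_\times(A,B)$ on the point set $A\times B\subset \dbR^2$. For nonzero entries, the relation $ab=a'b'$ is equivalent to $b/a'=b'/a$; writing $t$ for this common ratio and $m(t)=|\{(x,y)\in A\times B:y=tx\}|$ for the number of points of $A\times B$ on the line $L_t$ of slope $t$ through the origin, the pair $(a',b)$ contributes $m(t)$ choices and the pair $(a,b')$ another $m(t)$, yielding
$$E_\times(A,B)=\sum_t m(t)^2.$$
Sign and zero issues are handled in the standard way, splitting $A$ and $B$ into sign classes and treating the (easy) contribution of any zero element separately, so I may assume $A,B\subset (0,\infty)$ and every slope $t$ is positive.

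Next, sort the slopes with $m(t)>0$ as $t_1<t_2<\cdots<t_s$. The geometric heart of the argument is that, for any two distinct slopes $t<t'$, the Minkowski sum of $(A\times B)\cap L_t$ and $(A\times B)\cap L_{t'}$ consists of $m(t)m(t')$ distinct points of $(A+A)\times (B+B)$, each lying in the open angular wedge strictly between $L_t$ and $L_{t'}$; distinctness holds because two lines of prescribed slopes through a prescribed sum recover the two summands uniquely. Applied to consecutive pairs $(t_i,t_{i+1})$, whose open wedges are pairwise disjoint subsets of the plane, this gives
$$\sum_{i=1}^{s-1}m(t_i)\,m(t_{i+1})\le |A+A|\cdot |B+B|.$$

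The final step is a dyadic pigeonhole. Since any point of $L_t\cap (A\times B)$ projects injectively onto each axis, $m(t)\le \min\{|A|,|B|\}$, so the values $m(t)$ fall into at most $\lceil\log_2\min\{|A|,|B|\}\rceil$ dyadic classes $R_k=\{t:2^{k-1}\le m(t)<2^k\}$. Restricting the ordering to a single $R_k$ preserves the displayed inequality, since consecutive-within-$R_k$ wedges remain pairwise disjoint open subsets of the plane; within a class one also has $m(t_i)^2\le 4\,m(t_i)m(t_{i+1})$ by the dyadic constraint. Summing first over $i$ inside $R_k$ and then over the $O(\log\min\{|A|,|B|\})$ values of $k$ yields the claimed bound. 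The main obstacle is the careful bookkeeping required to secure the sharp constant $4$: one must absorb the last element of each dyadic class (which has no in-class successor) into the preceding pair, and confirm that the initial sign-class reduction meshes cleanly with the angular wedge argument. Beyond this, the proof is a mechanical two-set version of Solymosi's original argument.
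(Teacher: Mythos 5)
The paper's own proof of this lemma is just a citation of the remarks concluding \cite[\S 2.3]{Sol2009} (and of \cite[Theorem 6]{KS2015}), so what you have written is the argument the paper takes for granted, and your reconstruction is essentially correct: the reformulation $E_\times(A,B)=\sum_t m(t)^2$ on the grid $A\times B$, the Minkowski-sum wedge observation giving $\sum_i m(t_i)m(t_{i+1})\le|A+A|\,|B+B|$ over consecutive slopes, and the dyadic pigeonhole are exactly the ingredients of Solymosi's proof adapted to two sets.

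The ``main obstacle'' you flag is in fact resolved by a sharper dyadic inequality than the one you quote. Within a class $R_k$ one has $m(t_i)/m(t_{i+1})<2$, hence $m(t_i)^2<2\,m(t_i)m(t_{i+1})$, not merely $\le 4\,m(t_i)m(t_{i+1})$. With the factor $2$ the bookkeeping closes cleanly: if $s_1<\cdots<s_p$ are the slopes in $R_k$, then $\sum_{j<p}m(s_j)^2<2\sum_{j<p}m(s_j)m(s_{j+1})\le 2|A+A|\,|B+B|$, and the terminal term also satisfies $m(s_p)^2<2\,m(s_{p-1})m(s_p)\le 2|A+A|\,|B+B|$ when $p\ge 2$ (while for $p=1$ one has $m(s_1)^2\le\min\{|A|,|B|\}^2\le|A+A|\,|B+B|$ trivially). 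Thus each class contributes at most $4|A+A|\,|B+B|$, and summing over the $\lceil\log\min\{|A|,|B|\}\rceil$ dyadic classes gives exactly the stated bound. Had you retained the factor $4$ in the dyadic inequality, absorbing the terminal term would have produced a final constant of $8$ rather than $4$; this is the only substantive slip, and the sign/zero reduction you wave at is indeed routine.
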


\begin{proof} The desired conclusion follows from the remarks concluding 
\cite[\S2.3]{Sol2009}. See also \cite[Theorem 6]{KS2015}.
\end{proof}

We are now equipped for the main act of this section.

\begin{proof}[The proof of Theorem \ref{theorem1.1}] Let $\del$ and $\tet$ be 
parameters with $0<\del<1$ to be fixed in due course, and let $c_1$ and $c_2$ be the 
positive constants whose existence is guaranteed via Lemma \ref{lemma3.3}. We consider 
a subset $A$ of the real numbers with $|A|=N$, where $N$ is a sufficiently large natural 
number. We prove first that there exist disjoint subsets $B$ and $C$ of $A$, with 
$A=B\cup C$ and
\begin{equation}\label{4.a}
\max\{ E_+(B),E_\times (C)\}\ll |A|^{3-\del}(\log |A|)^\tet.
\end{equation}

\par Should one have $E_+(A)\le N^{3-\delta}(\log N)^\tet$, then $A=A\cup\emptyset$ is 
trivially a decomposition of the type we seek, and so we may suppose henceforth that 
$E_+(A)>N^{3-\delta}(\log N)^\tet$. We now proceed inductively to define certain subsets 
$A_j$ of $A$ for $1\le j\le K$, for a suitable integer $K$. Suppose that $k\ge 0$ and that 
the first $k$ of these sets have been defined. We put
\begin{equation}\label{4.0}
C_k=\bigcup_{j=1}^kA_j\quad \text{and}\quad B_k=A\setminus C_k.
\end{equation}
Should $E_+(B_k)\le N^{3-\delta}(\log N)^\tet$, then we set $K=k$ and stop. Otherwise, 
we define the set $A_{k+1}$ as follows. We may suppose that 
$E_+(B_k)>N^{3-\delta}(\log N)^\tet$, and so it follows from Lemma \ref{lemma3.3} that 
there exists $A_{k+1}\subseteq B_k\subseteq A$ such that
\begin{equation}\label{4.1}
|A_{k+1}|\ge c_1N^{1-3\del/4}(\log N)^{(3\tet-5)/4}
\end{equation}
and
\begin{equation}\label{4.2} 
|A_{k+1}+A_{k+1}|\le c_2N^{7\del}(\log N)^{5-7\tet}|A_{k+1}|.
\end{equation}
Having defined the set $A_{k+1}$, we may define $B_{k+1}$ and $C_{k+1}$ according 
to (\ref{4.0}), and repeat this decomposition argument.\par

The iteration described in the last paragraph must terminate for a value of $K$ satisfying 
$K\le K_0$, where $K_0=\lfloor c_1^{-1}N^{3\delta/4}(\log N)^{(5-3\tet)/4}\rfloor $. For
$$|B_k|=|A|-\sum_{j=1}^k|A_j|\le N-kc_1N^{1-3\del/4}(\log N)^{(3\tet-5)/4},$$
whence $B_{K_0}$ (if it exists) must satisfy 
$|B_{K_0}|\le c_1N^{1-3\del/4}(\log N)^{(3\tet-5)/4}$. In such circumstances, a trivial 
estimate yields the bound
$$E_+(B_{K_0})\le \left( c_1N^{1-3\del/4}(\log N)^{(3\tet-5)/4}\right)^3\le N^{3-\del},
$$
and our iteration stops.\par

Now equipped with the sets $A_1,\ldots ,A_K$ defined by this iterative process, we ease 
our exposition by abbreviating $B_K$ to $B$ and $C_K$ to $C$. Note that $A$ is the 
disjoint union of $B$ and $C$. The first observation is that a defining feature of our 
iteration is the bound $E_+(B)\le N^{3-\del}(\log N)^\tet$. We group the subsets $A_j$ 
by cardinality, taking $\grA_m$ to be the union of those subsets $A_j$ for which 
$2^{-m}N<|A_j|\le 2^{1-m}N$. Notice that cardinality constraints ensure that each set 
$\grA_m$ consists of no more than $2^m$ of the subsets $A_j$, and, moreover, one has 
$m=O(\log N)$. In the first instance, grouping together the sets $A_j$ in this way leads via 
Lemma \ref{lemma3.4} to the bound
\begin{align}
E_\times(C)&=E_\times (C,C)\ll (\log N)^2\sum_{k,l}E_\times(\grA_k,\grA_l)\notag \\
&\ll (\log N)^2\sum_k\sum_{A_i\subseteq \grA_k}\sum_l
\sum_{A_j\subseteq \grA_l}2^{k+l}E_\times(A_i,A_j).\label{4.wa}
\end{align}
Next, an application of Lemma \ref{lemma3.5} propels us to the estimate
$$E_\times (C)\ll (\log N)^3\biggl( \sum_k\sum_{A_i\subseteq \grA_k}2^k|A_i+A_i|
\biggr)^2.$$
Consequently, on applying the property (\ref{4.2}) of these subsets $A_i$, we infer that
$$E_\times (C)\ll N^{14\del}(\log N)^{13-14\tet}\biggl( \sum_k
\sum_{A_i\subseteq \grA_k}2^k|A_i|\biggr)^2.$$
But the property (\ref{4.1}) of the subsets $A_i\subseteq \grA_k$ ensures that the inner 
sum here is empty whenever
\begin{equation}\label{4.ww2}
2^{1-k}N<c_1N^{1-3\delta/4}(\log N)^{(3\tet-5)/4}.
\end{equation}
Moreover, it is apparent that for each $k$ one has
\begin{equation}\label{4.ww1}
\sum_{A_i\subseteq \grA_k}|A_i|\le N.
\end{equation}
Thus we deduce that
\begin{align*}
E_\times (C)&\ll N^{2+14\del}(\log N)^{13-14\tet}\biggl( N^{3\delta/4}
(\log N)^{(5-3\tet)/4}\biggr)^2\\
&\ll N^{2+31\del/2}(\log N)^{31(1-\tet)/2}.
\end{align*}

We now set $3-\del=2+31\del/2$ and $\tet=31(1-\tet)/2$, which is to say 
$\del=\tfrac{2}{33}$ and $\tet=\tfrac{31}{33}$, and conclude that
\begin{equation}\label{4.5}
E_+(B)\le N^{3-\del}(\log N)^\tet\quad \text{and}\quad 
E_\times (C)\ll N^{3-\del}(\log N)^\tet .
\end{equation}
Since $N=|A|$, this confirms the relations (\ref{4.a}). In order to complete the proof of 
Theorem \ref{theorem1.1}, it now remains only to establish that
$$\max\{ E_+(B,C),E_\times(B,C)\}\ll N^{3-\del/2}(\log N)^{\tet/2}.$$  
But rewriting the energy between the sets $B$ and $C$ in the form
$$E_+(B,C)=\sum_{x\in B+C}r_{B+C}(x)^2=\sum_{y\in (B-B)\cap (C-C)}r_{B-B}(y)
r_{C-C}(y),$$
we infer from Cauchy's inequality that
\begin{align*}
E_+(B,C)&\le \biggl(\sum_{y\in B-B}r_{B-B}(y)^2\biggr)^{1/2}\biggl( \sum_{y\in C-C}
r_{C-C}(y)^2\biggr)^{1/2}\\
&=E_+(B)^{1/2}E_+(C)^{1/2}.
\end{align*}
Thus we conclude from (\ref{4.5}) and the trivial estimate $E_+(C)\ll N^3$ that
$$E_+(B,C)\ll \left(N^{3-\del}(\log N)^\tet \right)^{1/2}(N^3)^{1/2}
\ll N^{3-\del/2}(\log N)^{\tet/2}.$$
By an entirely analogous argument, one finds also that
$$E_\times(B,C)\ll N^{3-\del/2}(\log N)^{\tet/2}.$$
This completes the proof of the final claim of Theorem \ref{theorem1.1}.
\end{proof}

\section{Low-energy decompositions in finite fields}
The strategy prosecuted in \S3 may be adapted without serious difficulty to the setting of 
finite fields $\dbF_p$, with $p$ prime. The only ingredient which requires serious 
modification is Lemma \ref{lemma3.5}, which bounds the multiplicative energy of a set in 
terms of its sumset. One approach to handling this difficulty is by appeal to work of 
Bourgain \cite{Bou2010} and Rudnev \cite{Rud2012}.

\begin{lemma}\label{lemma2dash} Suppose that $A\subset\dbF_p$. Then
$$E_\times (A)\ll |A+A|^{9/4}|A|^{1/2}+p^{-1/4}|A+A|^2|A|^{5/4}.$$
Moreover, provided that $|A|<\sqrt p$, one has the sharper bound
$$E_\times(A)\ll |A+A|^{7/4}|A| \lceil \log|A|\rceil .$$
\end{lemma}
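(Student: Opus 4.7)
The plan is to reinterpret $E_\times(A)$ as an incidence count and then apply the finite-field incidence theorems of Bourgain and Rudnev cited above the lemma. Writing $r(\lambda)=|A\cap \lambda A|$ for $\lambda\in A/A$, one has $E_\times(A)=\sum_\lambda r(\lambda)^2$, and each non-zero $\lambda$ labels a line through the origin in $\dbF_p^2$ whose intersection with $A\times A$ has cardinality $r(\lambda)$. A dyadic pigeonholing, with $\calL_k=\{\lambda:2^k\le r(\lambda)<2^{k+1}\}$ for $0\le k\le\lceil\log_2|A|\rceil$, reduces the task to controlling $|\calL_k|$ in terms of $|A+A|$, $|A|$ and $p$: after bounding $4^k|\calL_k|$ and summing over $k$, the desired estimate for $E_\times(A)$ emerges, with at most a $\log$-loss absorbed into the dyadic range.

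For the sharper, conditional bound I would follow Solymosi's parallelogram strategy, suitably adapted to $\dbF_p$. The hypothesis $|A|<\sqrt p$ forces $|(A+A)\times(A+A)|<p^2$, which is the regime in which the Szemer\'edi--Trotter-type incidence bound of \cite{Bou2010} applies without degeneration caused by the modulus. Translating each rich line $\ell_\lambda$ by $(c,c)$ for $c\in A$ yields an auxiliary family of $|A||\calL_k|$ lines, each carrying $\sim 2^k$ points inside $(A+A)^2$; the incidence bound, combined with a Konyagin--Shkredov-style refinement of the dyadic counting, yields $4^k|\calL_k|\ll |A+A|^{7/4}|A|$, and summing over the $O(\log|A|)$ dyadic ranges delivers the factor $\lceil\log|A|\rceil$ in the claimed estimate.

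For the first bound, which must hold without the restriction $|A|<\sqrt p$, I would invoke Rudnev's point-plane incidence theorem \cite{Rud2012}. One encodes the multiplicative equation $a_1a_2=a_3a_4$ as a point-plane incidence in $\dbF_p^3$: for example, the point set may be taken inside $(A+A)\times A\times A$ and the planes indexed by $A^3$ via a linear encoding of the defining relation. Rudnev's bound of the form $I\ll |\calP|^{3/4}|\Pi|^{3/4}+k\max(|\calP|,|\Pi|)$, with $k$ bounding the maximum number of collinear points on any plane, then produces the main term $|A+A|^{9/4}|A|^{1/2}$, while the degeneration of the bound once the point set approaches size $p^2$ yields the secondary term $p^{-1/4}|A+A|^2|A|^{5/4}$. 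The principal obstacle will be the bookkeeping: constructing a point-plane configuration whose incidence count is bounded below by $E_\times(A)$ with the correct dependence on $|A+A|$, controlling the collinearity parameter $k$ via a Pl\"unnecke-type estimate of the kind in Lemma \ref{lemma3.2}, and optimising the dyadic parameter so that Rudnev's two terms interact with $p$ in the stated manner.
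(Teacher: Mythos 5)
The paper's ``proof'' of this lemma is a two-line citation: the first bound is exactly \cite[Proposition 1]{Bou2010}, and the second is \cite[equation (3.22)]{Rud2012}. Your proposal, by contrast, tries to sketch derivations from scratch, which is a genuinely different undertaking; but as written it has both attribution errors and genuine gaps. You attribute the $|A+A|^{9/4}|A|^{1/2}+p^{-1/4}|A+A|^2|A|^{5/4}$ estimate to ``Rudnev's point-plane incidence theorem \cite{Rud2012},'' but \cite{Rud2012} is the 2012 IMRN sum-product paper, not the point-plane incidence paper (which is \cite{Rud2014}, used later in the present paper for Lemma \ref{lemma4.5}); and in any case that $9/4$-exponent bound is Bourgain's, proved by the earlier Bourgain--Katz--Tao circle of incidence ideas, predating Rudnev's point-plane theorem. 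Conversely, the $|A+A|^{7/4}|A|\lceil\log|A|\rceil$ bound under $|A|<\sqrt{p}$ is the one that comes from Rudnev's paper, not from Bourgain's ``Szemer\'edi--Trotter-type incidence bound'' as you suggest. So the two attributions in your sketch are essentially swapped.

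Beyond the attributions, neither sketch is close to a proof. For the first bound, encoding $a_1a_2=a_3a_4$ as a point-plane incidence and then ``optimising the dyadic parameter so that Rudnev's two terms interact with $p$ in the stated manner'' is precisely the part that would have to be carried out, and it is not obvious that the exponents $9/4$, $1/2$, $-1/4$, $5/4$ emerge from Rudnev's theorem rather than from Bourgain's incidence machinery; the paper's own use of the point-plane theorem (via Lemma \ref{lemma4.5}) produces the structurally different shape $|A\cdot A|^{3/2}|A|+p^{-1}|A\cdot A|^2|A|^2$ for $E_+(A)$, so the exponents are not a trivial match. For the second bound, the Solymosi parallelogram argument does not transfer to $\dbF_p$ in the way your sketch indicates (Solymosi's proof uses the linear order on $\dbR$, which has no analogue mod $p$), and the claim that $|(A+A)\times(A+A)|<p^2$ ``is the regime in which the Szemer\'edi--Trotter-type bound of \cite{Bou2010} applies without degeneration'' needs to be made precise before the exponent $7/4$ can be extracted. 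As you yourself acknowledge (``The principal obstacle will be the bookkeeping''), the hard work is left undone. For the purposes of this paper, the correct move is simply to cite the two results; if you want a self-contained proof, you would need to actually reproduce the arguments of Bourgain and Rudnev rather than gesture at them.
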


\begin{proof} The first bound on the multiplicative energy contained in this lemma is simply 
\cite[Proposition 1]{Bou2010}, whilst the second is essentially \cite[equation (3.22)]
{Rud2012}.
\end{proof}

By following the path described in \S3 leading to the proof of Theorem \ref{theorem1.1}, 
the reader will have little difficulty in obtaining the conclusion recorded in the following 
theorem.

\begin{theorem}\label{theorem4.2} Let $p$ be a large prime, and suppose that 
$A\subseteq \dbF_p$ satisfies $|A|<\sqrt{p}$. Then, with $\del=4/227$, there exist 
disjoint subsets $B$ and $C$ of $A$, with $A=B\cup C$ and
$$\max\{E_+(B),E_\times(C)\}\ll |A|^{3-\del}(\log |A|)^{1+\del/2}.$$
When $|A|\ge \sqrt{p}$, then with $\del=4/283$ and $\tet=223/249$, one has instead
$$\max\{E_+(B),E_\times(C)\}\ll |A|^3(|A|/p)^\del (\log |A|)^\tet.$$
\end{theorem}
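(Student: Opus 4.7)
My plan is to follow the proof of Theorem \ref{theorem1.1} essentially verbatim, substituting the Bourgain--Rudnev bounds of Lemma \ref{lemma2dash} for Solymosi's Lemma \ref{lemma3.5}. The rest of the machinery carries over without change: the Balog--Szemer\'edi--Gowers extraction of Lemma \ref{lemma3.3} rests only on Pl\"unnecke's inequality and Schoen's BSG, both valid in arbitrary abelian groups, and the subadditivity Lemma \ref{lemma3.4} holds in any ring.

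Fix $\del$ and $\tet$, and run the iterative extraction of (\ref{4.0})--(\ref{4.2}): while $E_+(B_k)>N^{3-\del}(\log N)^\tet$, apply Lemma \ref{lemma3.3} to produce $A_{k+1}\subseteq B_k$ of small doubling. The process terminates in $O(N^{3\del/4}(\log N)^{(5-3\tet)/4})$ steps and yields $A=B\cup C$ with $C=\bigcup_j A_j$ and $E_+(B)\le N^{3-\del}(\log N)^\tet$ built in. For $E_\times(C)$, I would group the $A_j$ dyadically into $\grA_m$ and apply Lemma \ref{lemma3.4} as in (\ref{4.wa}). Since Lemma \ref{lemma2dash} bounds only single-set energies $E_\times(A)$, the key new step is the Cauchy--Schwarz bridge
$$E_\times(A_i,A_j)\le\bigl(E_\times(A_i)E_\times(A_j)\bigr)^{1/2},$$
which follows from writing $E_\times(A_i,A_j)=\sum_t r_{A_i}(t)r_{A_j}(1/t)$ with $r_A(t)=|\{(a,a')\in A^2:a=ta'\}|$. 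Crucially, Cauchy--Schwarz preserves the product structure that drives the dyadic computation of \S3.

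In the regime $|A|<\sqrt p$, inserting the sharper second bound of Lemma \ref{lemma2dash} gives $E_\times(A_i,A_j)\ll(|A_i+A_i||A_j+A_j|)^{7/8}(|A_i||A_j|)^{1/2}\log N$. Absorbing the doubling ratios via (\ref{4.2}) and evaluating the dyadic sums via the H\"older bound $\sum_{A_i\subseteq\grA_k}|A_i|^{11/8}\ll(2^{1-k}N)^{3/8}N$ together with the termination bound $2^{k_{\max}}\ll N^{3\del/4}(\log N)^{(5-3\tet)/4}$ yields $E_\times(C)\ll N^{11/4+211\del/16}(\log N)^{(213-211\tet)/16}$. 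Matching the $N$-exponent to $3-\del$ forces $\del=4/227$, and a compatible $\tet$ absorbs the logarithmic contribution into $(\log N)^{1+\del/2}$.

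The regime $|A|\ge\sqrt p$ proceeds analogously, using the first bound of Lemma \ref{lemma2dash}. Cauchy--Schwarz on the resulting two-term bound produces three contributions: $\sqrt{X_iX_j}$, $\sqrt{Y_iY_j}$ (carrying an extra $p^{-1/4}$), and mixed cross terms $\sqrt{X_iY_j}$. The pure first-summand contribution gives $N^{11/4+267\del/16}$, matching $3-\del$ at $\del=4/283$. The main obstacle lies in the second and cross contributions: the $p^{-1/4}$ factor must be carefully tracked through the Cauchy--Schwarz and the dyadic sums so that each contribution is bounded by $|A|^3(|A|/p)^\del$. Since the exponents $11/8$ and $13/8$ arising in the two summands differ, it may be necessary to split the $A_i$ according to whether $|A_i|$ is above or below $\sqrt p$, applying the sharper second bound of Lemma \ref{lemma2dash} in the latter case, before verifying that the choice $\del=4/283,\,\tet=223/249$ balances all contributions within the stated bound.
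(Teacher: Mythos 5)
The paper itself offers no detailed proof of Theorem~\ref{theorem4.2}: it simply asserts that the reader should follow \S3 with Lemma~\ref{lemma2dash} replacing Lemma~\ref{lemma3.5}, and then proves the stronger Theorem~\ref{theorem1.3} by the reversed-roles argument instead. Your reconstruction is that intended proof, and the key ingredient you correctly supply (which the paper leaves unsaid) is the Cauchy--Schwarz bridge $E_\times(A_i,A_j)\le\bigl(E_\times(A_i)E_\times(A_j)\bigr)^{1/2}$; your exponent computations ($\del=4/227$ and $\del=4/283$) check out against the statement. One small simplification: the ``obstacle'' you flag at the end is not one, because the bound (\ref{4.wa}) factorises as a perfect square $\bigl(\sum_k\sum_{A_i\subseteq\grA_k}2^kE_\times(A_i)^{1/2}\bigr)^2$, so after inserting $E_\times(A_i)^{1/2}\ll X_i^{1/2}+Y_i^{1/2}$ the cross terms are automatically dominated by the two pure terms, and since the first bound of Lemma~\ref{lemma2dash} is unconditional there is no need to split the $A_i$ according to their size relative to $\sqrt p$.
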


Instead, we follow an alternative strategy in which the roles of addition and multiplication in 
our previous argument are interchanged. We begin by recording an appropriate analogue 
of Lemma \ref{lemma3.3}.

\begin{lemma}\label{lemma4.3} Let $p$ be a prime number. Then there are positive 
constants $c_1$ and $c_2$ with the following property. Suppose that $N$ is a sufficiently 
large natural number. Let $B$ be a non-empty finite subset of $\dbF_p$ with $|B|\le N$, 
and suppose that $\del$ and $\tet$ are real numbers with $0<\del<1$. Then, either 
$E_\times (B)\le 2N^{3-\delta}(\log N)^\tet$, or else there exists a subset $A'$ of $B$ 
such that
$$|A'|\ge c_1N^{1-3\delta/4}(\log N)^{(3\tet-5)/4}\quad \text{and}\quad 
|A'\cdot A'|\le c_2N^{7\delta}(\log N)^{5-7\tet}|A'|.$$
\end{lemma}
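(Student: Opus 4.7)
The strategy is to follow the proof of Lemma~\ref{lemma3.3} essentially verbatim, replacing the additive structure of an ambient abelian group by the multiplicative structure of $\dbF_p^*$. Since Lemmata~\ref{lemma3.1} and~\ref{lemma3.2} were stated in the generality of arbitrary abelian groups, they transfer directly to $(\dbF_p^*,\cdot)$; the only new wrinkle, which explains the factor of $2$ in front of $N^{3-\delta}(\log N)^\theta$, is the need to dispose of a possible zero element of $B$.

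First I would reduce to the case $0\notin B$. Quadruples $(a_1,a_2,a_3,a_4)\in B^4$ satisfying $a_1a_2=a_3a_4$ and having at least one vanishing coordinate must in fact have two vanishing coordinates, and hence contribute at most $O(|B|^2)=O(N^2)$ to $E_\times(B)$. When $N$ is sufficiently large this quantity is at most $N^{3-\delta}(\log N)^\theta$, so the hypothesis $E_\times(B)>2N^{3-\delta}(\log N)^\theta$ forces the set $B^*:=B\setminus\{0\}\subseteq\dbF_p^*$ to satisfy $E_\times(B^*)>N^{3-\delta}(\log N)^\theta$.

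Next, setting $\alpha=E_\times(B^*)/|B^*|^3$, one has $\alpha>N^{-\delta}(\log N)^\theta$. I would apply Lemma~\ref{lemma3.1} to $B^*$ viewed inside the abelian group $(\dbF_p^*,\cdot)$, obtaining subsets $A',B''\subseteq B^*$ with
$$\min\{|A'|,|B''|\}\gg \alpha^{3/4}(\log(1/\alpha))^{-5/4}|B^*|$$
and $|A'\cdot (B'')^{-1}|\ll \alpha^{-7/2}(\log(1/\alpha))^{5/2}(|A'||B''|)^{1/2}$, where $(B'')^{-1}=\{1/b:b\in B''\}$. Invoking Lemma~\ref{lemma3.2} in its multiplicative guise with the sets $A'$ and $(B'')^{-1}$ then yields $|A'\cdot A'|\le |A'\cdot (B'')^{-1}|^2/|B''|$. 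The same algebraic manipulations as in the proof of Lemma~\ref{lemma3.3} now deliver the claimed lower bound on $|A'|$ together with the claimed upper bound on $|A'\cdot A'|/|A'|$.

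Since the abelian-group framework is already built into Lemmata~\ref{lemma3.1} and~\ref{lemma3.2}, no fundamental combinatorial obstacle arises: the only real task is to translate notation from addition to multiplication and to dispose of the zero element, making this proof a bookkeeping exercise modelled on the template of Lemma~\ref{lemma3.3}.
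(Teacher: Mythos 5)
Your proposal is correct, and the underlying mathematics is the same as the paper's: both arguments rest on the fact that Lemmata~\ref{lemma3.1} and~\ref{lemma3.2} are stated for arbitrary abelian groups, and both dispose of a possible zero element by bounding its contribution to $E_\times(B)$ by $O(|B|^2)$ (the paper's explicit constant is $4|B|^2$), which is what the factor of $2$ buys. The presentational difference is that the paper first transfers the problem to the additive group $\dbZ/(p-1)\dbZ$ by fixing a primitive root $g$ and setting $I=\{r\in[1,p-1]:g^r\in B\}$, then applies Lemma~\ref{lemma3.3} verbatim and pulls the resulting $I'$ back via $A'=\{g^r:r\in I'\}$; you instead apply the abelian-group versions of Schoen's lemma and Pl\"unnecke directly inside $(\dbF_p^*,\cdot)$. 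Since the discrete logarithm is precisely the isomorphism between $(\dbF_p^*,\cdot)$ and $(\dbZ/(p-1)\dbZ,+)$, the two routes are mathematically equivalent; your version avoids introducing the primitive root, while the paper's has the small advantage that Lemma~\ref{lemma3.3} can be cited as stated without re-interpreting its additive notation multiplicatively. One minor point worth making explicit in your write-up: when you invoke Lemma~\ref{lemma3.2} in multiplicative guise, you should say (as the paper does in its additive proof) that you apply it to $A'$ and $(B'')^{-1}$, which is what converts the control on $|A'\cdot(B'')^{-1}|$ furnished by Lemma~\ref{lemma3.1} into the desired bound on $|A'\cdot A'|$; you did note this, so the argument is complete.
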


\begin{proof} Let $g\in \dbF_p^\times$ be a primitive root, and put
$$I=\{ r\in [1,p-1]:g^r\in B\}.$$
Taking account of the possibility that $0\in B$, we see that
$$E_\times (B)\le E_+(I)+4|B|^2.$$
Consequently, if $E_\times (B)>2N^{3-\del}(\log N)^\tet$, then $E_+(I)>N^{3-\del}
(\log N)^\tet$. We therefore deduce from Lemma \ref{lemma3.3} that there exists a 
subset $I'$ of $I$ such that
$$|I'|\ge c_1N^{1-3\del/4}(\log N)^{(3\tet-5)/4}\quad \text{and}\quad 
|I'+I'|\le c_2N^{7\del}(\log N)^{5-7\tet}|I'|.$$
Putting $A'=\{g^r:r\in I'\}$, the conclusion of the lemma follows.
\end{proof}

An appropriate analogue of Lemma \ref{lemma3.4} follows by applying Cauchy's 
inequality, just as before.

\begin{lemma}\label{lemma4.4} Let $A_j$ $(1\le j\le J)$ be finite subsets of a ring. Then 
one has
$$E_+\biggl( \bigcup_{j=1}^JA_j\biggr)\le J^3\sum_{j=1}^JE_+(A_j).$$
\end{lemma}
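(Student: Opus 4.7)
The plan is to unfold the additive energy as a sum of squares of representation functions, decompose the representation function along the $J$ sets $A_j$, and then apply Cauchy's inequality twice to reduce to the individual energies $E_+(A_j)$.

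First, I would write, with $X=\bigcup_{j=1}^J A_j$,
$$E_+(X)=\sum_z r_{X+X}(z)^2,\quad \text{where}\quad r_{X+X}(z)=|\{(x,y)\in X^2:x+y=z\}|.$$
Since every pair $(x,y)\in X^2$ lies in some $A_i\times A_j$, we have the pointwise bound
$$r_{X+X}(z)\le \sum_{i=1}^J\sum_{j=1}^J r_{A_i+A_j}(z),$$
with equality when the $A_j$ are disjoint and an upper bound otherwise. Squaring and invoking Cauchy's inequality over the $J^2$ pairs $(i,j)$ gives
$$r_{X+X}(z)^2\le J^2\sum_{i,j}r_{A_i+A_j}(z)^2,$$
and summing over $z$ yields $E_+(X)\le J^2\sum_{i,j}E_+(A_i,A_j)$, which is the analogue of Lemma \ref{lemma3.4} in the present setting.

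It then remains to control $E_+(A_i,A_j)$ by $E_+(A_i)$ and $E_+(A_j)$. For this I would use the standard Cauchy-Schwarz bound
$$E_+(A_i,A_j)=\sum_z r_{A_i+A_j}(z)^2\le \Bigl(\sum_z r_{A_i+A_i}(z)^2\Bigr)^{1/2}\Bigl(\sum_z r_{A_j+A_j}(z)^2\Bigr)^{1/2}=E_+(A_i)^{1/2}E_+(A_j)^{1/2},$$
obtained by writing $r_{A_i+A_j}(z)^2$ as a sum over quadruples and applying Cauchy's inequality to the resulting bilinear form (indexing one factor by differences arising from $A_i$ and the other by differences arising from $A_j$). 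A second application of Cauchy's inequality now yields
$$\sum_{i,j}E_+(A_i)^{1/2}E_+(A_j)^{1/2}=\Bigl(\sum_{i=1}^J E_+(A_i)^{1/2}\Bigr)^2\le J\sum_{i=1}^J E_+(A_i).$$
Combining with the previous display produces $E_+(X)\le J^3\sum_j E_+(A_j)$, as required.

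Since every step is a routine application of Cauchy's inequality and no structural facts about the ring are used, I do not anticipate any genuine obstacle; the only point requiring mild care is verifying the bilinear Cauchy-Schwarz bound $E_+(A_i,A_j)\le E_+(A_i)^{1/2}E_+(A_j)^{1/2}$, which the authors quite reasonably describe as folklore.
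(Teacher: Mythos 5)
Your proof is correct and is exactly what the paper has in mind: the paper leaves Lemma \ref{lemma4.4} without an explicit proof, remarking only that it ``follows by applying Cauchy's inequality, just as before'' (i.e.\ as for Lemma \ref{lemma3.4}), and your two-stage Cauchy--Schwarz argument --- first $E_+(\cup_j A_j)\le J^2\sum_{i,j}E_+(A_i,A_j)$, then $E_+(A_i,A_j)\le E_+(A_i)^{1/2}E_+(A_j)^{1/2}$ followed by a final Cauchy--Schwarz over $i$ --- is precisely that computation.
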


Finally, before extracting a relation between the additive energy of a set and its 
corresponding product set, we recall a consequence of a lemma from recent work of 
Roche-Newton, Rudnev and Shkredov \cite{RNRS2014} that has its origins in a paper of 
Rudnev \cite{Rud2014} concerning incidences between planes and points in three 
dimensions.

\begin{lemma}\label{lemma4.5} Let $A, B, C\subseteq \dbF_p$. Suppose that 
$|A||B||B\cdot C|\le p^2$. Then
\begin{equation}\label{4.w1}
E_+(A,C)\ll \left( |A||B\cdot C|\right)^{3/2}|B|^{-1/2}+|A||B\cdot C||B|^{-1}\max 
\{ |A|, |B\cdot C|\}.
\end{equation}
\end{lemma}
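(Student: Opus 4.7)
My plan for proving Lemma \ref{lemma4.5} is to recast $E_+(A,C)$ as a point--plane incidence count in $\mathbb{F}_p^3$, and then invoke the point--plane incidence theorem of Rudnev \cite{Rud2014}, in the style of \cite{RNRS2014}. The role of the auxiliary set $B$ is to mediate between $C$ and the larger set $D := B\cdot C$, thereby giving us room for a Rudnev-type application.

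The first step is to convert the energy into a weighted incidence problem by multiplying through by elements of $B$. For any quadruple $(a_1,a_2,c_1,c_2) \in A^2\times C^2$ with $a_1+c_1=a_2+c_2$ and any $b \in B$, the identity $ba_1+bc_1=ba_2+bc_2$ holds with $bc_1,bc_2 \in D$. Since the map $(a_1,a_2,c_1,c_2,b)\mapsto(a_1,a_2,b,bc_1,bc_2)$ is injective, this gives
\begin{equation*}
|B|\cdot E_+(A,C) \le S := \#\{(a,a',b,d,d') \in A^2\times B\times D^2 : ba+d=ba'+d'\}.
\end{equation*}

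The second step is to interpret $S$ as the incidence count $I(P,\Pi)$ for the point set $P=\{(ba,d,b):(a,b,d)\in A\times B\times D\}\subseteq \mathbb{F}_p^3$ and the plane set $\Pi=\{\pi_{a',d'} : X+Y=a'Z+d'\}_{(a',d')\in A\times D}$. Then $|P|=|A||B||D|$ (the third coordinate $b$ uniquely recovers $a$ from $ba$) and $|\Pi|=|A||D|$, and the incidence condition $(ba,d,b)\in\pi_{a',d'}$ amounts exactly to $ba+d=a'b+d'$, so $I(P,\Pi)=S$.

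The third step is to apply Rudnev's point--plane incidence theorem to this configuration; the lemma's hypothesis $|A||B||D| \le p^2$ is precisely the cardinality requirement needed. A short case analysis on the direction $(v_1,v_2,v_3)$ of a line $\ell\subseteq\mathbb{F}_p^3$ shows that the relevant non-degeneracy parameter (the maximum number of planes of $\Pi$ through a common line, under point--plane duality) is controlled by $\max\{|A|,|D|\}$: when $v_3\ne 0$ the defining equations pin the plane uniquely, while when $v_3=0$ the constraint forces $v_1=v_2$ and then at most $\min(|A|,|D|)$ planes of $\Pi$ survive. Combining this with Rudnev's estimate in the form $I(P,\Pi)\ll |P|^{1/2}|\Pi|+k|\Pi|$ (obtained via duality since here $|P|\ge|\Pi|$) and dividing through by $|B|$ then yields the asserted bound.

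The main obstacle I anticipate is the bookkeeping in the third step: one must invoke Rudnev's theorem in precisely the right dualized form so as to produce a leading term of shape $|P|^{1/2}|\Pi|$ (which after division by $|B|$ becomes $(|A||D|)^{3/2}|B|^{-1/2}$) together with a non-degeneracy term of shape $k|\Pi|$ with $k\le \max\{|A|,|D|\}$ (which after division yields $|A||D||B|^{-1}\max\{|A|,|D|\}$). The delicate point is ruling out the naive bound $k\le\max\{|A|,|B|,|D|\}$ by exploiting the special structure of the plane family $\Pi$ rather than the point set $P$ directly, and ensuring that the plane multiplicities arising from the parametrisation $(a',d')$ do not inflate the count.
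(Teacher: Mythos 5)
The paper itself disposes of this lemma in one line, as ``an immediate consequence of \cite[Theorem 6]{RNRS2014}'', so your attempt is a from-scratch reconstruction rather than a comparison with an argument spelled out in the paper. Your first two steps are fine and standard: the multiplication-by-$b$ trick giving $|B|\,E_+(A,C)\le S$ with $S=\#\{(a,a',b,d,d'):ba+d=a'b+d'\}$, and the identification $S=I(P,\Pi)$ with $P=\{(ba,d,b)\}$, $\Pi=\{\pi_{a',d'}\}$, $|P|=|A||B||D|$, $|\Pi|=|A||D|$ (writing $D=B\cdot C$). The computation that $k\le\max\{|A|,|D|\}$ for the pencil of planes of $\Pi$ through a common line is also essentially right.

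The gap is in the invocation of Rudnev's theorem. Rudnev's point--plane theorem, in the form you want, reads $I(P,\Pi)\ll|\Pi|\sqrt{|P|}+k|\Pi|$, where $k$ is the maximum number of collinear points of $P$, and it requires $|P|\le|\Pi|$ (together with $|P|\ll p^2$). In your setup $|P|=|A||B||D|\ge|A||D|=|\Pi|$, so the hypothesis fails. Applying the theorem by duality (swap points and planes) yields instead
$$I(P,\Pi)\ll|P|\sqrt{|\Pi|}+l\,|P|,$$
where $l$ is the maximum number of planes of $\Pi$ through a common line. You assert the opposite, $I(P,\Pi)\ll\sqrt{|P|}\,|\Pi|+k|\Pi|$, ``obtained via duality since here $|P|\ge|\Pi|$'' --- but when $|P|\ge|\Pi|$ one has $\sqrt{|P|}\,|\Pi|\le|P|\sqrt{|\Pi|}$, so you are claiming something \emph{stronger} than the correct dual form, not a consequence of it. After dividing by $|B|$, the correct dual form gives only
$$E_+(A,C)\ll(|A||D|)^{3/2}+l\,|A||D|,$$
which is short of the target by a factor $|B|^{1/2}$ in the main term and $|B|$ in the secondary term; whereas your claimed form would, after division, yield exactly the stated bound --- but it does not follow from Rudnev. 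One also checks that $I(P,\Pi)\ll\sqrt{|P|}\,|\Pi|+k|\Pi|$ cannot be a general incidence theorem when $|P|\gg|\Pi|$ (e.g.\ $|\Pi|=1$ and all points of $P$ lying in the single plane already violates it). So the heart of the lemma --- recovering the $|B|^{-1/2}$ saving in the main term --- is precisely what your argument fails to obtain; the proof in \cite{RNRS2014} must exploit more than the bare count $|P|=|A||B||D|$, $|\Pi|=|A||D|$ (for instance by working with the genuine ``collinear triples'' quantity and a Cauchy--Schwarz step that puts the $B$-average on the right side of the inequality), and this is the step you would need to supply.
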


\begin{proof} This is an immediate consequence of \cite[Theorem 6]{RNRS2014}.
\end{proof}

We extract from this lemma upper bounds for $E_+(A)$ suitable for our subsequent 
applications.

\begin{lemma}\label{lemma4.6} Suppose that $A\subset \dbF_p$. Then
$$E_+(A)\ll |A\cdot A|^{3/2}|A|+p^{-1}|A\cdot A|^2|A|^2.$$
\end{lemma}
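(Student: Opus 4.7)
The plan is to deduce Lemma~\ref{lemma4.6} directly from Lemma~\ref{lemma4.5} by taking both outer arguments equal to $A$, so that the left-hand side becomes $E_+(A)$, and by choosing the auxiliary set $B\subseteq A$ differently in two regimes determined by whether $|A|^2|A\cdot A|\le p^2$ or $|A|^2|A\cdot A|>p^2$. In both regimes the containment $B\subseteq A$ yields $B\cdot A\subseteq A\cdot A$, so we may replace $|B\cdot A|$ by $|A\cdot A|$ in every upper bound delivered by Lemma~\ref{lemma4.5}.

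In the sparse regime $|A|^2|A\cdot A|\le p^2$, I set $B=A$. The hypothesis of Lemma~\ref{lemma4.5} is then immediate, and the resulting bound reads
$$E_+(A)\ll |A|\,|A\cdot A|^{3/2}+|A\cdot A|\max\{|A|,|A\cdot A|\}.$$
The universal inequality $|A\cdot A|\le |A|^2$ forces $|A\cdot A|^{1/2}\le|A|$, so the secondary term is absorbed into $|A||A\cdot A|^{3/2}$, producing the first term of the claimed estimate.

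In the dense regime $|A|^2|A\cdot A|>p^2$, I take $B\subseteq A$ to be any subset of cardinality $k=\lfloor p^2/(|A||A\cdot A|)\rfloor$. This $k$ is a positive integer with $k\le |A|$: the bound $k\ge 1$ comes from the universal estimate $|A|\,|A\cdot A|\le p^2$ (a consequence of $|A|,|A\cdot A|\le p$), whilst $k\le|A|$ is precisely the dense hypothesis. Using $|B\cdot A|\le |A\cdot A|$ one finds that $|A||B||B\cdot A|\le p^2$, so Lemma~\ref{lemma4.5} applies. Substituting the value of $k$ into its principal term gives
$$(|A||B\cdot A|)^{3/2}|B|^{-1/2}\le (|A||A\cdot A|)^{3/2}\bigl(|A||A\cdot A|/p^2\bigr)^{1/2}=p^{-1}|A|^2|A\cdot A|^2,$$
which is the second term of Lemma~\ref{lemma4.6}. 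The residual term $|A||A\cdot A||B|^{-1}\max\{|A|,|A\cdot A|\}$ of Lemma~\ref{lemma4.5} is smaller than the principal term by a factor $\max\{|A|,|A\cdot A|\}/p\le 1$, and is therefore absorbed.

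The one delicate point is recognising that the hypothesis $|A||B||B\cdot C|\le p^2$ of Lemma~\ref{lemma4.5} should be saturated by calibrating $|B|$ to the value $p^2/(|A||A\cdot A|)$: in the sparse regime this constraint is slack with $B=A$, while in the dense regime shrinking $B$ both restores the hypothesis of Lemma~\ref{lemma4.5} and simultaneously injects the $p^{-1}$ factor appearing in the second term of Lemma~\ref{lemma4.6}. No additional ingredients beyond Lemma~\ref{lemma4.5} and the trivial bound $|A\cdot A|\le |A|^2$ are required.
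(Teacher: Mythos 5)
Your proposal is correct and follows essentially the same route as the paper: split into the two regimes according to whether $|A|^2|A\cdot A|\le p^2$, apply Lemma~\ref{lemma4.5} with $C=A$, take $B=A$ in the sparse regime and $B$ a subset of $A$ of cardinality $\lfloor p^2/(|A||A\cdot A|)\rfloor$ in the dense regime, and absorb the secondary term using $|A\cdot A|\le |A|^2$ and $|A|,|A\cdot A|\le p$. The only cosmetic caveat is that $|B|^{-1}$ is only $\ll$ (not $\le$) $|A||A\cdot A|/p^2$ after taking the floor, which is harmless since the target bound is itself a $\ll$ statement.
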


\begin{proof} Provided that $|A|^2|A\cdot A|\le p^2$, the desired conclusion follows by 
applying Lemma \ref{lemma4.5} with $B=C=A$. We have only to note that 
$|A\cdot A|\le |A|^2$, so that the second term on the right hand side of (\ref{4.w1}) is 
majorised by the first.\par

Suppose next that $|A|^2|A\cdot A|>p^2$. Put
$$n=\left\lfloor \frac{p^2}{|A||A\cdot A|}\right\rfloor. $$
Since $|A|\le p$ and $|A\cdot A|\le p$, we may suppose that $1\le n<|A|$. We take $B$ to 
be any subset of $A$ having $n$ elements. Then we have $|A||A\cdot A||B|\le p^2$. By 
applying Lemma \ref{lemma4.5} with $C=A$, we find that
$$E_+(A)\ll \left( |A||A\cdot B|\right)^{3/2}|B|^{-1/2}+|A||A\cdot B||B|^{-1}\max 
\{|A|,|A\cdot B|\}.$$
But since $B\subseteq A$, one has $A\cdot B\subseteq A\cdot A$, and hence
$$|A|\le |A\cdot B|\le |A\cdot A|\le p.$$
Moreover, since $|B|^{-1}=n^{-1}\ll |A||A\cdot A|p^{-2}$, we obtain
\begin{align*}
E_+(A)&\ll \left( |A||A\cdot A|\right)^{3/2}\left( |A||A\cdot A|p^{-2}\right)^{1/2}+
|A||A\cdot A|^2\left( |A||A\cdot A|p^{-2}\right)\\
&\ll |A|^2|A\cdot A|^2p^{-1}.
\end{align*}
This completes the proof of the lemma.
\end{proof}

We now outline the proof of our low-energy decomposition theorem over $\dbF_p$.

\begin{proof}[The proof of Theorem 1.3] We proceed precisely as in the proof of 
Theorem \ref{theorem1.1}, save that the roles of addition and multiplication are 
interchanged. For the sake of concision, we are expedient in implicity employing the 
appropriate analogue of all notation used therein. However, we provide essentially complete 
details of the argument. In the current situation, should one have 
$E_\times (A)\le 2N^{3-\delta}(\log N)^\tet$, then $A=\emptyset \cup A$ is trivially a 
decomposition of the type we seek, and so we may suppose henceforth that 
$E_\times (A)>2N^{3-\delta}(\log N)^\tet$. We proceed inductively to define subsets 
$A_j$ of $A$ for $1\le j\le K$, for a suitable integer $K$. Suppose that $k\ge 0$ and that 
the first $k$ of these sets have been defined. Put
\begin{equation}\label{5.0}
B_k=\bigcup_{j=1}^kA_j\quad \text{and}\quad C_k=A\setminus B_k.
\end{equation}
Should $E_\times (C_k)\le 2N^{3-\delta}(\log N)^\tet$, then we set $K=k$ and stop. 
Otherwise, we define the set $A_{k+1}$ as follows. We may suppose that 
$E_\times (C_k)>2N^{3-\delta}(\log N)^\tet$, and so it follows from Lemma 
\ref{lemma4.3} that there exists $A_{k+1}\subseteq C_k\subseteq A$ such that
$$|A_{k+1}|\ge c_1N^{1-3\del/4}(\log N)^{(3\tet-5)/4}$$
and
\begin{equation}\label{5.2} 
|A_{k+1}\cdot A_{k+1}|\le c_2N^{7\del}(\log N)^{5-7\tet}|A_{k+1}|.
\end{equation}
Having defined the set $A_{k+1}$, we may define $B_{k+1}$ and $C_{k+1}$ according 
to (\ref{5.0}), and repeat this decomposition argument.\par

As in the corresponding proof of Theorem \ref{theorem1.1} in \S3, the iteration defined in 
the last paragraph must terminate for some 
$K\le \lfloor c_1^{-1}N^{3\del/4}(\log N)^{(5-3\tet)/4}\rfloor$. We again put $B=B_K$ 
and $C=C_K$, and note that $A$ is the disjoint union of $B$ and $C$. In particular, we now 
have $E_\times (C)\le 2N^{3-\del}(\log N)^\tet$. We again group the subsets $A_j$ by 
cardinality, taking $\grA_m$ to be the union of those subsets $A_j$ for which 
$2^{-m}N<|A_j|\le 2^{1-m}N$. The application of Lemma \ref{lemma4.4} replaces 
(\ref{4.wa}) by the estimate
$$E_+(B)\ll (\log N)^3\sum_k\sum_{A_i\subseteq \grA_k}2^{3k}E_+(A_i).$$
By applying Lemma \ref{lemma4.6}, we obtain the bound
$$E_+(B)\ll (\log N)^3\sum_k\sum_{A_i\subseteq \grA_k}2^{3k}\left( 
|A_i\cdot A_i|^{3/2}|A_i|+p^{-1}|A_i\cdot A_i|^2|A_i|^2\right).$$
Next, on applying the property (\ref{5.2}) of these subsets $A_i$, we infer that
$$E_+(B)\ll \left(N^\del(\log N)^{1-\tet}\right)^{21/2}\sum_k 2^{3k}
\sum_{A_i\subseteq \grA_k}\biggl( |A_i|^5+p^{-2}N^{7\del}(\log N)^{5-7\tet}
|A_i|^8\biggr)^{1/2}.$$
The inner sum here is empty whenever (\ref{4.ww2}) holds. Note also that the definition of 
$\grA_k$ ensures that when $A_i\subseteq \grA_k$, then $2^k|A_i|\le 2N$. Then on 
recalling (\ref{4.ww1}), we deduce that
\begin{align}
E_+(B)\ll &\, N^{(3+21\del)/2}(\log N)^{21(1-\tet)/2}\left( N^{3\del/4}
(\log N)^{(5-3\tet)/4}\right)^{3/2}N\notag \\
&\ \ \ \ \ \ \ \ \ \ \ \ \ \ \ \ \ \ \ \ \ \ \ \ \ \ +p^{-1}N^{3+14\del}
(\log N)^{13-14\tet}N\log N\notag \\
\ll &\, \left( N^{20+93\del}(\log N)^{99-93\tet}\right)^{1/8}+p^{-1}N^{4+14\del}
(\log N)^{14(1-\tet)}.\label{r.0}
\end{align}

\par Recall the definitions of $\alp$ and $\bet$ from the statement of Theorem 
\ref{theorem1.3}. We take $\ome$ and $\nu$ to be any real numbers with 
$p=\frac{1}{10}N^\ome (\log N)^\nu$. In the first instance, we constrain our choices of 
$\del$ and $\tet$ to satisfy the inequalities
\begin{equation}\label{r.1}
\del\le \frac{8\ome -12}{19}\quad \text{and}\quad \tet\ge \frac{13-8\nu}{19}.
\end{equation}
In such circumstances, one finds that it is the first term on the right hand side of (\ref{r.0}) 
that dominates. We consequently define $\del$ and $\tet$ by means of the equations 
$8(3-\del)=20+93\del$ and $8\tet=99-93\tet$, which is to say that $\del =4/101$, and 
$\tet=99/101$. These choices for $\del$ and $\tet$ satisfy the constraint (\ref{r.1}) 
provided that
$$\ome\ge \frac{19\del+12}{8}=\frac{161}{101}\quad \text{and}\quad 
\nu\ge \frac{13-19\tet}{8}=-\frac{71}{101},$$
and this may be assured when $p\ge \frac{1}{10}N^{1/\alp}(\log N)^{-\bet/\alp}$. This 
latter constraint is satisfied when $N\le p^\alp (\log p)^\bet$. Under the latter condition, 
therefore, we obtain the bound
$$\max\{ E_+(B),E_\times (C)\}\ll N^{3-\del}(\log N)^{1-\del/2}.$$
Since $N=|A|$, this confirms the first conclusion of Theorem \ref{theorem1.3}.\par

We next take $\ome$ and $\nu$ to be any real numbers with $p=10N^\ome (\log N)^\nu$. 
In this second instance, we constrain our choices of $\del$ and $\tet$ to satisfy the 
inequalities
\begin{equation}\label{r.2}
\del\ge \frac{8\ome -12}{19}\quad \text{and}\quad \tet\le \frac{13-8\nu}{19}.
\end{equation}
In such circumstances, one finds that it is the second term on the right hand side of (\ref{r.0}) 
that dominates. We consequently define $\del$ and $\tet$ by means of the equations 
$3-\del=4+14\del-\ome$ and $\tet=14(1-\tet)-\nu$, which is to say that 
$\del =(\ome-1)/15$ and $\tet=(14-\nu)/15$. These choices for $\del$ and $\tet$ satisfy the 
constraint (\ref{r.2}) provided that
$$\ome\le \frac{19\del+12}{8}=\frac{19\ome+161}{120}\quad \text{and}\quad 
\nu\le \frac{13-19\tet}{8}=\frac{19\nu-71}{120}.$$
These conditions are satisfied provided that
$$\ome\le \frac{161}{101}\quad \text{and}\quad \nu\le -\frac{71}{101},$$
and this may be assured when $p\le 10N^{1/\alp}(\log N)^{-\bet/\alp}$. This latter 
constraint is satisfied when $N\ge p^\alp (\log p)^\bet$. Under the latter condition, 
therefore, we obtain the bound
$$\max\{ E_+(B),E_\times (C)\}\ll N^{3-\del}(\log N)^\tet 
=N^3(N/p)^{1/15}(\log N)^{14/15}.$$
Since $N=|A|$, this confirms the second conclusion of Theorem \ref{theorem1.3}, and 
completes the proof of Theorem \ref{theorem1.3}.
\end{proof}

We finish this section by confirming the lower bounds (\ref{1.x}) presented following 
Theorem \ref{theorem1.3}. This is a simple exercise in exponential sums over finite fields, 
in which we employ the standard notation $e_p(x)=e^{2\pi ix/p}$. When 
$B,C\subseteq \dbF_p$, write
$$f(u)=\sum_{b\in B}e_p(ub)\quad \text{and}\quad g(u)=\sum_{c_1,c_2\in C}
e_p(uc_1c_2).$$
Then, by orthogonality, one finds that
$$E_+(B)=p^{-1}\sum_{u=0}^{p-1}|f(u)|^4\quad \text{and}\quad 
E_\times (C)=p^{-1}\sum_{u=0}^{p-1}|g(u)|^2.$$
Using positivity, and discarding all terms in each sum save for that with $u=0$, we 
thus conclude that
$$E_+(B)\ge p^{-1}f(0)^4=p^{-1}|B|^4\quad \text{and}\quad 
E_\times(C)=p^{-1}g(0)^2=p^{-1}|C|^4.$$
This confirms the desired lower bounds.

\section{Higher order energies} We finish our account of low-energy decomposition 
theorems with a brief discussion of higher order energies, and in particular the proof of 
Theorem \ref{theorem1.4}. With $\bfb$ as shorthand for $(b_3,\ldots ,b_k)$, write
$$T(\bfa,\bfa')=\text{card}\{(a_1,a_2),(a'_1,a'_2)\in A_1\times 
A_2:a_1+\dots+a_k=a_1'+\dots+a_k'\}.$$
For $k\geq 2$, one obtains cheap bounds on the $k$-fold additive energy between sets 
$A_1,\ldots ,A_k$ by means of the relation
\begin{align*}
E_+(A_1,\dots,A_k)&=\sum_{\bfa,\bfa'\in A_3\times \ldots \times A_k}T(\bfa,\bfa')\\
&\le |A_3|^2\cdots|A_k|^2\max_{b}U(b),
\end{align*}
where we write
$$U(b)=\text{card}\{(a_1,a_2),(a'_1,a'_2)\in A_1\times A_2:a_1+a_2+b=a_1'+a_2'\}.$$
By Cauchy's inequality, for any fixed value of $b$, one has
\begin{align*}
U(b)&=\sum_{x\in A_1+A_2}r_{A_1+A_2}(x-b)r_{A_1+A_2}(x)\\
&\le \biggl(\sum_{x\in A_1+A_2-b}r_{A_1+A_2}(x)^2\biggr)^{1/2}
\biggl( \sum_{x\in A_1+A_2} r_{A_1+A_2}(x)^2\biggr)^{1/2}\\
&\le E_+(A_1,A_2).
\end{align*}
Thus we obtain the bound 
$$E_+(A_1,\dots,A_k)\le |A_3|^2\cdots|A_k|^2E_+(A_1,A_2).$$
An entirely analogous argument coughs up the corresponding bound
$$E_\times (A_1,\dots,A_k)\le |A_3|^2\cdots|A_k|^2E_\times (A_1,A_2).$$

\par Given a finite subset $A$ of the real numbers, consider positive integers $m$ and $n$ 
with $m\ge 2$ and $n\ge 2$. With $\del=\frac{2}{33}$, it follows from Theorem 
\ref{theorem1.1} that there exist disjoint subsets $B$ and $C$ of $A$, with $A=B\cup C$, 
and
$$\max\{ E_+(B),E_\times (C)\} \ll |A|^{3-\del}(\log |A|)^{1-\del}.$$
Using these same subsets $B$ and $C$, it follows from our opening discussion in this 
section that
$$E_+^{(m)}(B)\le |A|^{2m-4}E_+(B)\ll |A|^{2m-1-\del}(\log |A|)^{1-\del},$$
and likewise
$$E_\times^{(n)}(C)\le |A|^{2n-4}E_\times (B)\ll |A|^{2n-1-\del}(\log |A|)^{1-\del}.$$
This completes the proof of Theorem \ref{theorem1.4}.

\bibliographystyle{amsbracket}
\providecommand{\bysame}{\leavevmode\hbox to3em{\hrulefill}\thinspace}

\end{document}